\theoremstyle{plain}
\newtheorem{THEOREM}{Theorem}[section]
\newtheorem{theorem}[THEOREM]{Theorem}
\newtheorem{corollary}[THEOREM]{Corollary}
\newtheorem{lemma}[THEOREM]{Lemma}
\theoremstyle{definition}
\theoremstyle{remark}
\newtheorem{remark}[THEOREM]{Remark}
\newcommand{\thm}[1]{Theorem~\ref{#1}}
\newcommand{\lem}[1]{Lemma~\ref{#1}}
\def \a {\alpha}
\def \b {\beta}
\def \d {\delta}
\def \D {\Delta}
\def \g {\gamma}
\def \e {\varepsilon}
\def \f {\varphi}
\def \l {\lambda}
\def \L {\Lambda}
\def \n {\nabla}
\def \r {\rho}
\def \O {\Omega}
\def \bs {\boldsymbol{\sigma}}
\def \cC {\mathcal{C}}
\def \cS {\mathcal{S}}
\def \cF {\mathcal{F}}
\def \dx  {\, \mbox{d}x}
\def \dy  {\, \mbox{d}y}
\def \dxi  {\, \mbox{d}\xi}
\def \Hdiv {H^m_{\mathrm{div}}}
\newcommand{\N}{\ensuremath{\mathbb{N}}}   
\newcommand{\Z}{\ensuremath{\mathbb{Z}}}   
\newcommand{\R}{\ensuremath{\mathbb{R}}}   
\newcommand{\C}{\ensuremath{\mathbb{C}}}   
\newcommand{\T}{\ensuremath{\mathbb{T}}}   
\renewcommand{\S}{\ensuremath{\mathbb{S}}} 
\def \lan {\langle}
\def \ran {\rangle}
\def \p {\partial}
\def \ra {\rightarrow}
\def \ss {\subset}
\def \bs {\backslash}
\newcommand{\rest}[2]{#1\raisebox{-0.3ex}{\mbox{$\mid_{#2}$}}}
\DeclareMathOperator{\supp}{supp} %
\DeclareMathOperator{\diver}{div} %
\DeclareMathOperator{\im}{Im} %
\DeclareMathOperator{\op}{Op} %
\DeclareMathOperator{\Id}{Id} %
\def \Ldiv {L^2_{\mathrm{div}}(\O)}
\def \comp {\mathrm{comp}}
\def \ess {\mathrm{ess}}
\begin{document}

\title{On the Rayleigh-Taylor instability in presence of a background shear}

\author{Roman Shvydkoy}
\address{Department of Mathematics, Statistics, and Computer Science, M/C 249,\\
    University of Illinois, Chicago, IL 60607, USA}
\email{shvydkoy@uic.edu}

\begin{abstract}In this note we revisit the classical subject of the Rayleigh-Taylor instability in presence of an incompressible background shear flow. We derive a formula for the essential spectral radius of the evolution group generated by the linearization near the steady state and reveal that the velocity variations neutralize shortwave instabilities. The formula is a direct generalization of the result of H. J. Hwang and Y. Guo in the hydrostatic case \cite{HG}. Furthermore, we construct a class of steady states which posses unstable discrete spectrum with neutral essential spectrum. The technique involves the WKB analysis of the evolution equation and contains novel compactness criterion for pseudo-differential operators on unbounded domains. 
    \end{abstract}

\thanks{The author thanks Zhiwu Lin and Chongchung Zeng for many fruitful conversations and
	 Georgia Institute of Technology for hospitality. This research was partially supported by NSF grant DMS 1515705 and the College of LAS, UIC}

\subjclass[2010]{76E20,35P05,47D06}

\keywords{Rayleigh-Taylor instability, essential spectrum, pseudo-differential operator, WKB}

\maketitle

\section{Introduction}  This note revisits the classical subject of Rayleigh-Taylor instability -- when an inhomogeneous fluid is subjected to the gravitation force $\vec{g}$  and if heavier fluid occurs on top of lighter fluid it naturally tries to overturn, \cite{Ch1961,Rayleigh}. The dynamics is described by a system of Euler equation (we assume the fluid is ideal) given by
\begin{equation}\label{e:RT}
\begin{split}
\r(u_t + u \cdot \n u) + \n p &= \r \vec{g} \\
\r_t + u \cdot \n \r &= 0\\
\n \cdot u & = 0.
\end{split}
\end{equation}
Here $\vec{g} = \lan 0,-g \ran$, and $g$ is the gravitational acceleration. We assume the fluid is confined to the strip $\O = \T \times \R$, and is periodic in the first coordinate $x_1$. 

A stratified smooth density $\rho_0(x_2)$ and zero velocity $u_0=0$ define a hydrostatic equilibrium with pressure gradient balancing out the gravitation force: $\n p_0 = \rho_0 \vec{g}$. Assuming that $\rho_0(\pm \infty) = \rho_{\pm}$, $0< \rho_{-} < \rho_+<\infty$ the fluid turns into an unstable state. Rigorous analysis of the eigenvalue problem for the linearized system around such steady state (and in fact more generally in presence of a compressible shear $u_0=\lan 0, U(x_2)\ran $) was performed by Lafitte et al \cite{Laf2001,HeLaf2003,CCLR2001,Laf2008} exploiting the variational nature of the resulting system. The work of Hwang and Guo \cite{HG} gives a complete spectral analysis of the hydrostatic case showing that the maximal exponential growth rate $\L$ of the $C_0$-semigroup is given by 
\begin{equation}\label{}
\L^2 = \sup_{v\in L^2(\O)} \frac{ \int g \r'_0 v^2 dx}{\int \r_0 v^2 dx} = \sup_{x: \r'_0(x)>0} \sqrt{   \frac{g \r_0'(x)}{\r_0(x)} } .
\end{equation}
Moreover $\L$ is a limit point of a sequence of exact eigenvalues $\l_k \to \L$, making $\L$ a point of the essential spectrum. The work \cite{HG} extends further to prove the nonlinear instability of hydrostatic states in $H^s$ for $s\geq 3$, and Lafitte extends these results in \cite{Laf2008} to include quasi-isobaric density profiles which allow for $\rho_- = 0$. 

In this present work we study linear instability for more general steady states which include background parallel shear $u_0=\lan U(x_2), 0 \ran$. The linearization around $(u_0,\r_0)$ takes form
\begin{equation}\label{e:LRT}
\begin{split}
v_t + u_0 \cdot \n v + v \cdot \n u_0 + \frac{1}{\r_0} \n q & = \frac{\vec{g}}{\r_0} r \\
r_t + u_0 \cdot \n r + v \cdot \n \r_0 &= 0\\
\n \cdot v & = 0.
\end{split}
\end{equation}
This system does not seem to retain variational formulation and thus its spectral analysis becomes quite different from the hydrostatic case. At present it is not known whether linearly unstable states are also non-linearly unstable. The difficulty in proving such a statement for conservative systems lies in the presence of continuous, or essential, spectrum. Even for the homogeneous 2D Euler equation this Lyapunov-type theorem is an outstanding open problem despite recent strong efforts by Lin and Zeng \cite{LZ,L-survey,L}, Friedlander et al  \cite{FSV,VF2003}.  In search for the eigenmodes in classical formulation 
\begin{equation}\label{e:eigen}
\begin{split}
v &= \n^\perp( \phi(y) e^{ik x}) e^{\l t} \\
r& = r(y) e^{ik x} e^{\l t},
\end{split}
\end{equation}
where $\phi$ and $r$ are complex unknown functions, and $\l \in \C$ is a sought after eigenvalue, we obtain the full Rayleigh-type system 
\begin{equation}\label{e:Rayl}
\begin{split}
k^2 \r_0 \phi - (\r_0 \phi')' + \frac{(\r_0 U')'}{U - c} \phi &= - \frac{g \r'_0 }{(U - c)^2} \phi\\
(U - c) r + \phi \r_0' & = 0,
\end{split}
\end{equation}
where $c = i\frac{\l}{k}$. The related well-studied counterpart of \eqref{e:Rayl} is obtained by performing Boussinesq approximation in which $\r_0$ is assumed to vary little compared to $\r_0'$. Thus, the density on the left hand side of \eqref{e:Rayl} is replaced with an averaged constant density $\bar{\r_0}$. The resulting system, called the Taylor-Goldstein equation, is more amenable to analysis. The unstable modes were constructed by Friedlander using the method of continued fractions, \cite{fried}. 

In this work we study the full system \eqref{e:LRT} and prove two results. Let $G_t$ denote the evolution operator at time $t$ generated by \eqref{e:LRT}. Note that $\{G_t\}_{t\in \R}$ defines a strongly continuous $C_0$-group on any Sobolev space $H^m$, $m\in \R$.  First we prove that the essential spectral radius of $G_t$ on any $H^m$ is given by the formula
\begin{equation}\label{e:ress}
r_{\ess}(G_t, H^m) = e^{t \mu},
\end{equation}
where the exponent $\mu$ is given by
\begin{equation}\label{e:mu}
\mu = \sup_{x: U'(x) = 0, \r'_0(x)>0} \sqrt{   \frac{g \r_0'(x)}{\r_0(x)} },
\end{equation}
where $\mu=0$ if the set over which the supremum is taken is empty. This formula generalizes the result of Hwang and Guo \cite{HG} to non-hydrostatic case. It also demonstrates a surprising stabilization effect of the background shear: if the shear is not uniform $U' \neq 0$ at the unstable points $\rho_0'>0$, then the essential spectrum is neutral making instability purely large scale, i.e. coming from the discrete part of the spectrum. 
 
Second, we construct a class of steady states $(U,\rho_0)$ for which $\mu=0$, yet the Rayleigh system \eqref{e:Rayl} has a non-trivial solution with unstable spectrum, \thm{t:Rayl}.  The argument goes by perturbation from an unstable hydrostatic state $(0,\rho_0)$ along an arbitrary shear $U$ with $U' \neq 0$. This provides a wide range of examples of steady states with neutral essential spectrum yet non-trivial unstable point spectrum. The constructed states are posed to be non-linearly unstable or even have local invariant manifolds, since the  obstacles coming from presence of shortwave instabilities are removed, see  Lin and Zeng \cite{LZ} for the case of the homogeneous Euler equations. We will leave these questions for future research.  
 
At the core of our analysis is the geometric optics approach that has been successful in describing shortwave instabilities in a variety of fluid models, most recently see \cite{S-cont,SL,S-ess} and literature therein. Formula \eqref{e:ress} is an analogue of Vishik's result \cite{V} for the spectral radius on the incompressible Euler system and of author's general result \cite{S-ess} for advective systems on periodic domains.  The major difficulty presented by this particular situation consists of lack of compactness of the underlying fluid domain $\R \times \T$, which necessitates many of the extra technical argumentations to be made in description of the microlocal structure of the group $G_t$.  Following the strategy developed in prior works we seek to show that the group operator $G_t$ is given, up to a compact perturbation, by a pseudo-differential operator $\op[B_t]$ with symbol $B_t(x,\xi)$ generated by a bicharacteristic-amplitude system obtained from \eqref{e:LRT} by reading off the leading order term in the WKB ansatz 
\[
V(x,t) = b(x,t) e^{i S(x,t)/\e} + O(\e), \quad V = (v,r).
\]
To justify this statement we will devote first half of this article to extend the classical pseudo-differential calculus into the settings of mixed domains $\O =\T^d \times \R^{n-d}$ with specific purpose to derive new compactness criteria for PDOs on such domains suitable for our settings. The main result  formulated  in \thm{t:compact}  may be of independent interest.  Our criterion states that if the symbol $a(x,y,\xi)$  is of negative order in frequency $\xi$ (necessary condition) and  decays at spacial infinity $x,y \to \infty$ on any near-diagonal region $|x-y|<R$, then $\op[a]$ is compact on $\O$, see classical texts \cite{shubin,treves} for discussion on local compactness.

\section{Pseudo-differential calculus on mixed domains}

In this section we review some facts about pseudo-differential operators (PDO for short) on mixed periodic-open space domains $\O =\T^d \times \R^{n-d}$, where $1\leq d <n$, where $\T^d$ denotes the torus with $2\pi$ periods. Let  $dy$ denote the Haar measure on $\O$ (product of the usual Lebesgue on $\R^{n-d}$ and normalized Lebesgue on $\T^d$), and let $d\xi$ be the corresponding Haar measure on the dual group $\O^* = \Z^d \times \R^{n-d}$ (i.e. Lebesgue on $\R^{n-d}$ and counting on $\Z^d$). Our first goal is to make sense of the expression
\begin{equation}\label{mixedPDO}
Au(x) = \int_{\O^*}\int_{\O} a(x,y,\xi)e^{i(x-y)\cdot \xi} u(y) \dy \dxi.
\end{equation}
In the open-space dimensions we can apply the classical local theory. The PDOs on purely periodic domains can still be defined classically as on a compact manifold. However, it is more desirable to use of the explicitly global structure of the operator as defined by \eqref{mixedPDO}. Symbolic calculus of PDOs on the torus along has been developed previously in \cite{RT2010,RT2007}. As will be seen later, such results are insufficient for establishing effective boundedness and compactness criteria suitable to our application. We therefore will spend some effort to revisit the basic analysis of the operators given by \eqref{mixedPDO}.

\subsection{Class of amplitudes and the kernel of $A$}
An amplitude $a = a(x,y,\xi) \in C^\infty(\O \times \O \times \R^n)$ is said to belong to class $S^{m}_{1,0,0}(\O)$, or $S^m(\O)$ for short, if 
\[
\forall |\a|,|\b|,|\g|\leq n+1, \ \sup_{x,y,\xi} (1+|\xi|)^{|\g|-m}|\p_x^\a \p_y^\b \p_\xi^\g a(x,y,\xi)| = C_{\a,\b,\g}(a)<\infty.
\]
Note that we require $a$ to be defined for all $\xi \in \R^n$ and not just on $\O^*$.
To define a PDO \eqref{mixedPDO} we first make sense of the oscillatory sum-integral
\begin{equation}\label{}
K(x,y) = \int_{\O^*} a(x,y,\xi) e^{i (x-y)\cdot \xi} \dxi
\end{equation}
classically as a distribution on the Schwartz class $\cS(\O \times \O)$ given by
\begin{equation}\label{}
K(x,y) = \int_{\O^*}  (1+|\xi|^2)^{-N} a(x,y,\xi) (1-\D_x)^N e^{i (x-y)\cdot \xi} \dxi.
\end{equation}
for any $N>n/2$. More explicitly, for a test function $\phi \in \cS(\O \times \O)$, we have
\[
\lan K, \phi \ran =  \int_{\O^*\times \O \times \O}  (1+|\xi|^2)^{-N} \sum_{|\a + \b| \leq 2N}  c_{\a,\b} e^{i (x-y)\cdot \xi} \p_x^\b a(x,y,\xi) \p_x^\a \phi(x,y) \dxi \dx \dy,
\]
with a suitable choice of constants $c_{\a,\b}$. Thus,
\begin{equation}\label{}
K(x,y) = \int_{\O^*}  (1+|\xi|^2)^{-N}  \sum_{|\a + \b| \leq 2N} \p_x^\a[ c_{\a,\b}  \p_x^\b a(x,y,\xi)  e^{i (x-y)\cdot \xi} ] \dxi.
\end{equation}

We can then define the PDO by
\begin{equation}\label{IO}
Au(x) = \int_{\O} K(x,y) u(y) dy 
\end{equation}
interpreting integral distributionally. Amplitudes that we encounter will be of type $S^m(\O)$ for $m\leq 0$. In this case one can obtain detailed pointwise estimates on the kernel $K$ off the diagonal $x=y$. It is known that for $m=0$, $K$ is of Calderon-Zygmund type. For $m<0$, with a view towards developing compactness criteria we will need to know exactly how those bounds depend on the amplitude. The usual way of approaching this is to integrate by parts with respect to $\xi$. As this operation is prohibited in the discrete dimensions of $\O^*$, we first relate $K$ to the classical kernel over $\R^n$:
\[
K_{\R^n}(x,y) = \int_{\R^n} a(x,y,\xi) e^{i(x-y)\cdot \xi} d\xi.
\]
Let  $F_N$ denote the $d$-dimensional Fejer kernel, so that $F_N \ra \sum_{q \in \Z^d} \delta_{q}$.  We have
\[
K(x,y) = \lim_{N\to \infty}  \int_{\R^n}  a(x,y,\xi) F_N(\xi_1,\ldots,\xi_d) e^{i(x-y)\cdot \xi} \dxi.
\]
Unraveling the formula for $F_N$ we obtain
\[
K(x,y)  = \lim_{N \ra \infty} \frac{1}{N} \sum_{M = 1}^N \sum_{q\in \Z^d, |q_i| \leq M}  \int_{\R^n}  a(x,y,\xi)  e^{i(x-y + 2\pi q )\cdot \xi} \dxi.
\]
We readily obtain two representations: 
\begin{equation}\label{Kper}
\begin{split}
K(x,y) & = \lim_{N \ra \infty} \frac{1}{N} \sum_{M = 1}^N \sum_{q\in \Z^d, |q_i| \leq M} K_{\R^n}(x,y_1+2\pi q_1,\ldots,y_d+2\pi q_d, y_{d+1},\ldots,y_n) \\
&= \lim_{N \ra \infty} \frac{1}{N} \sum_{M = 1}^N \sum_{q\in \Z^d, |q_i| \leq M} K_{\R^n}(x_1+2\pi q_1,\ldots,x_d+2\pi q_d, x_{d+1},\ldots,x_n,y).
\end{split}
\end{equation}
Thus, $K$ is the Ces\`aro periodization of the open space kernel $K_{\R^n}$ only in periodic dimensions.
\begin{lemma} \label{Kbound} Suppose $a\in S^{-m}(\O)$, with $m \geq 0$. Then $K \in C^\infty(\{x\neq y\})$ and the following pointwise bounds hold
\begin{align}
| K(x,y) | & \leq \frac{C_1 C(x,y)}{|x-y|^{n-m\frac{n}{n+1}}}, \quad |x-y| < 1 \label{Knear}\\
| K(x,y) | & \leq \frac{C_2 C(x,y)}{|x-y|^{n+1}}, \quad |x-y| >1,\label{Kfar}
\end{align}
where $C(x,y) = \sup_{\xi \in \R^n, |\a| \leq n+1} (1+|\xi|)^{|\a| + m}| \p_\xi^\a a(x,y,\xi)| $ and $C_1$,$C_2>0$ are some absolute constants.
\end{lemma}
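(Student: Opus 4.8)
The plan is to reduce the periodic kernel $K$ to the open--space kernel $K_{\R^n}$ via the Ces\`aro periodization formula \eqref{Kper}, to prove for $K_{\R^n}$ one pointwise bound valid near the diagonal and one valid far from it --- each with the stated dependence on the amplitude through $C(x,y)$ --- and then to reassemble $K$ by summing the lattice translates. Two observations make this reduction clean: since $a$ is $2\pi$--periodic in its periodic variables, $C(x,y+2\pi q)=C(x,y)$ for every $q\in\Z^d$, so every translate appearing in \eqref{Kper} carries the same constant; and since $d<n$ the lattice series $\sum_{q\in\Z^d\setminus\{0\}}|q|^{-(n+1)}$ converges, which is what prevents periodization from destroying decay. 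That $K$ is a genuine function, smooth off the diagonal, is standard: away from $\{x=y\}$ one integrates by parts in $\xi$ to rewrite $K_{\R^n}(x,y)$ as an absolutely convergent integral that depends smoothly on $(x,y)$, and the lattice series inherits local smoothness.

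Write $z=x-y$, chosen as the representative in $\R^n$ of least length so that $|z|$ equals the distance on $\O$. The \emph{far} bound for $K_{\R^n}$, valid for all $z\neq 0$, comes from integrating by parts $n+1$ times in $\xi$: for a multi--index $\g$ with $|\g|=n+1$ one gets $|z^{\g}|\,|K_{\R^n}(x,y)|\le\int_{\R^n}|\p_\xi^{\g}a|\,d\xi\le C(x,y)\int_{\R^n}(1+|\xi|)^{-m-n-1}\,d\xi$, and the last integral is an absolute constant (uniformly in $m\ge0$) precisely because $m+n+1>n$; summing over $|\g|=n+1$ yields $|K_{\R^n}(x,y)|\le c\,C(x,y)|z|^{-(n+1)}$.

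The \emph{near} bound is the substantive step. Fix $\chi\in C_c^\infty(\R^n)$ with $\chi\equiv1$ on $\{|\xi|\le1\}$ and $\supp\chi\subset\{|\xi|\le2\}$, and split $K_{\R^n}=K_1+K_2$ according to the factors $\chi(|z|\xi)$ and $1-\chi(|z|\xi)$. On $K_1$, supported in $|\xi|\le2/|z|$, the only pointwise information is $|a|\le C(x,y)(1+|\xi|)^{-m}$, and the key is to estimate $\int_{|\xi|\le2/|z|}(1+|\xi|)^{-m}\,d\xi$ \emph{not} by the volume $\sim|z|^{-n}$ but by H\"older's inequality with exponent $\th=m/(n+1)\in[0,1]$: writing $(1+|\xi|)^{-m}=\big((1+|\xi|)^{-(n+1)}\big)^{\th}\cdot 1^{1-\th}$ and using $\int_{\R^n}(1+|\xi|)^{-(n+1)}\,d\xi<\infty$, the integral is bounded by a constant (absolute and uniform in $m$) times $|z|^{-n(1-\th)}=|z|^{-(n-\frac{mn}{n+1})}$ --- which is exactly the exponent in \eqref{Knear}. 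The piece $K_2$, supported in $|\xi|\ge1/|z|$, is handled by integrating by parts $n+1$ times in $\xi$, now with no boundary terms since $a\,(1-\chi(|z|\xi))$ is Schwartz in $\xi$; the derivatives that fall on the cutoff produce only factors $|z|^{|\g'|}$, which on $|\xi|\ge1/|z|$ are $\lesssim(1+|\xi|)^{-|\g'|}$ and therefore do not worsen the estimate, leaving $|K_2|\le c\,C(x,y)|z|^{-(n+1)}\int_{|\xi|\ge1/|z|}(1+|\xi|)^{-m-n-1}\,d\xi\lesssim C(x,y)|z|^{m-n}$. Since $m\ge0$ forces $m-n\ge-(n-\frac{mn}{n+1})$, this is $\le C(x,y)|z|^{-(n-\frac{mn}{n+1})}$ for $|z|<1$, and combining $K_1$ and $K_2$ completes the near bound for $K_{\R^n}$.

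It remains to assemble $K$ through \eqref{Kper}. For $|x-y|<1$ one has $|z|<1$, so in the lattice series only the $q=0$ term is near--diagonal and is estimated by the near bound, while every term with $q\neq0$ has $|z-2\pi q|\ge2\pi|q|-1\ge\pi|q|$ and is estimated by the far bound; the resulting tail is a bounded multiple of $C(x,y)$, hence $\le C(x,y)|x-y|^{-(n-\frac{mn}{n+1})}$ since the latter exceeds $1$ for $|x-y|<1$, and adding the $q=0$ contribution proves \eqref{Knear}. For $|x-y|>1$ every translate lies at distance comparable to $\max(|x-y|,|q|)$ from the base point, and summing the far bound for $K_{\R^n}$ over the lattice produces \eqref{Kfar}; the Ces\`aro averaging in \eqref{Kper} is harmless here since off the diagonal the lattice series is absolutely convergent and so coincides with the limit of its Ces\`aro means. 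I expect the main obstacle to be this last bookkeeping --- keeping the lattice sum of translates under control so that periodization does not spoil the decay --- together with the (routine but error--prone) verification that the cutoff--derivative terms in the integration by parts for $K_2$ obey the same bound as the leading term. The one genuinely non--obvious ingredient is the H\"older interpolation with exponent $m/(n+1)$ in the low--frequency piece, which is precisely what delivers the exponent $n-\frac{mn}{n+1}$ while keeping the constant absolute.
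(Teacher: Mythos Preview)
Your proof is correct and reaches the same conclusions, but the near--diagonal argument is organized differently from the paper's. The paper introduces a \emph{free} scale parameter $R>1$, splits $K_{\R^n}$ via $\chi_R(\xi)=\chi(\xi/R)$, bounds the low--frequency piece crudely by $R^n\sup_\xi|a|\le R^n C(x,y)$ (using no decay of the amplitude), integrates by parts $p=n-m+1$ times in the high--frequency piece to obtain $C(x,y)\,R^{-1}|x-y|^{-(n-m+1)}$, and then \emph{optimizes over $R$}: the minimum of $R^n+R^{-1}|x-y|^{-(n-m+1)}$ occurs at $R=|x-y|^{-1+m/(n+1)}$ and delivers the exponent $n-\tfrac{mn}{n+1}$. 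You instead fix the scale at $1/|z|$ from the start and recover the same exponent by the H\"older interpolation with $\th=m/(n+1)$ in the low--frequency integral, which exploits the decay $|a|\le C(x,y)(1+|\xi|)^{-m}$ that the paper discards. Your route has the mild advantage of always integrating by parts exactly $n+1$ times, sidestepping the paper's implicit need for $p=n-m+1$ to be a positive integer; the paper's route is more elementary in that the low--frequency piece requires nothing beyond the sup of $|a|$. Both arguments carry the same hidden restriction $m\le n+1$ (yours through $\th\le1$, the paper's through $p\ge0$), which is harmless for the applications. The far bound and the periodization step are essentially the same in both; your discussion of the lattice sum is more explicit than the paper's one--line reduction.
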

\begin{proof} In view of \eqref{Kper}, the lemma will follow from the corresponding estimates on $K_{\R^n}$. Let us fix an $R>1$ and the integer $p = n-m+1$. Let $\chi(\xi)$ be a standard cut-off function supported on the ball $|\xi|<2$, and let $\chi_R(\xi) = \chi(\xi/R)$. To prove \eqref{Knear} we write
\[
K_{\R^n}(x,y) = \int_{\R^n}  \chi_R(\xi) a(x,y,\xi) e^{i(x-y)\cdot \xi} d\xi +
\int_{\R^n} (1-  \chi_R(\xi) ) a(x,y,\xi) e^{i(x-y)\cdot \xi} d\xi .
\]
The first integral in simply bounded by $R^n \sup_{\xi}|a(x,y,\xi)| \leq R^n C(x,y)$. To estimate the second, let $j$ be such that $|x_j - y_j| \geq \frac{1}{\sqrt{n}} |x-y|$. We have
\[
\begin{split}
\int_{\R^n} (1-  \chi_R(\xi) ) a(x,y,\xi) e^{i(x-y)\cdot \xi} d\xi& = \frac{i^{-p}}{(x_j - y_j)^p}\int_{\R^n} (1-  \chi_R(\xi) ) a(x,y,\xi) \p_{\xi_j}^p e^{i(x-y)\cdot \xi} d\xi \\
&=
\frac{i^{-p}}{(x_j - y_j)^p}\int_{\R^n} (1-  \chi_R(\xi) ) a(x,y,\xi) \p_{\xi_j}^p e^{i(x-y)\cdot \xi} d\xi \\
&= \frac{(-i)^{-p}}{(x_j - y_j)^p}\int_{\R^n}\p_{\xi_j}^p( (1-  \chi_R(\xi) ) a(x,y,\xi) ) e^{i(x-y)\cdot \xi} d\xi .
\end{split}
\]
We have the bound
\[
\begin{split}
   |\p_{\xi_j}^p( (1-  \chi_R(\xi) ) a(x,y,\xi) ) | & \leq I_{R < |\xi| <2R} \sum_{l=0}^{p-1} \frac{1}{R^{p-l}} \frac{C(x,y)}{(1+|\xi|)^{l+m}} + I_{|\xi| \geq R} \frac{C(x,y)}{(1+|\xi|)^{p+m}} \\
   &\lesssim I_{|\xi| \geq R} \frac{C(x,y)}{(1+|\xi|)^{p+m}}.
\end{split}
\]
Thus,
\begin{equation}\label{e:trunc}
 \left| \int_{\R^n} (1-  \chi_R(\xi) ) a(x,y,\xi) e^{i(x-y)\cdot \xi} d\xi \right| \lesssim \frac{C(x,y)}{R |x-y|^p},
\end{equation}
and we obtain
\[
|K_{\R^n}(x,y)| \lesssim C(x,y) \left( R^n + \frac{1}{R|x-y|^{n+1-m}}\right).
\]
The minimum over $R$ is attained at $R = |x-y|^{-1 + \frac{m}{n+1}}$, and the bound \eqref{Knear} readily follows. 

The bound \eqref{Kfar} is obtained simply integrating by parts:
\[
|K_{\R^n}(x,y)| \lesssim \frac{1}{|x-y|^{n+1}} \int_{\R^n}| \p_{\xi_j}^{n+1} a(x,y,\xi)|d\xi 
\leq \frac{C_2 C(x,y)}{|x-y|^{n+1}}.
\]
\end{proof}
\begin{remark} Working with fractional derivatives one can reach a better bound
\[
| K(x,y) |  \leq \frac{C_\e C(x,y)}{|x-y|^{n-\frac{mn}{n+\e}}}, \quad |x-y| < 1 
\]
for any $\e>0$. It is more consistent with homogeneity of the amplitude. Furthermore, incorporating higher order derivatives in $C(x,y)$, one can show arbitrarily fast algebraic decay for $|x-y|>1$. We omit the details because bounds \eqref{Knear}, \eqref{Kfar} are sufficient for all our purposes. 
\end{remark}

 \subsection{Boundedness and compactness} The boundedness of $A$ as an operator from $L^2(\O)$ to $L^2(\O)$ in the classical case is well-understood and just as easily extends to the mixed case under question ( although most texts on PDOs treat either local $L^2$-boundedness or cases of left, right, or Weyl-quantized form of $A$). We will however revisit this issue again with the purpose to obtain a localization result for truncated operator (see \eqref{e:seeley}). This is needed later to determine the norm of $A$ in the Calkin algebra.
\begin{lemma}\label{l:bounded} If $a \in S^0$, then $A$ extends to a bounded operator  
from $L^2(\O)$ to $L^2(\O)$. Moreover, suppose $a=a(x,\xi)$ is $0$-homogeneous in $\xi$ for $|\xi|>1$. Let  $a_R = (1-\chi_R(\xi)) a(x,\xi)$. Then we have
\begin{equation}\label{e:seeley}
\limsup_{R\ra \infty} \|\op[a_R]\| \leq C \sup_{x\in \O, \xi \in \O^*} |a(x,\xi)|,
\end{equation}
for some absolute $C>0$.
\end{lemma}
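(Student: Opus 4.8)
I would handle the two assertions separately.

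For the $L^2(\O)$-boundedness of $A$ — the routine part, as the author's own remark indicates — the plan is a dyadic decomposition plus Cotlar--Stein almost-orthogonality. Decompose $a=\sum_{j\ge0}a\psi_j$ along a dyadic partition of unity in frequency; integrating by parts in $\xi$ (the amplitude $a\psi_j$ being now compactly supported there, so no truncation is needed and one gets full decay, cf.\ the proof of \lem{Kbound}) and passing from $\R^n$ by the periodization \eqref{Kper}, each $A_j=\op[a\psi_j]$ satisfies $|K_j(x,y)|\lesssim C_0(a)\,2^{jn}(1+2^j|x-y|)^{-(n+1)}$ with $C_0(a):=\sum_{|\gamma|\le n+1}\sup(1+|\xi|)^{|\gamma|}|\partial_\xi^\gamma a|$, so $\|A_j\|_{L^2(\O)\to L^2(\O)}\lesssim C_0(a)$ uniformly by Schur's test. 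When $|j-k|$ is large the frequency supports separate and the decay of $\|A_j^*A_k\|$, $\|A_jA_k^*\|$ is produced by non-stationary phase in a \emph{physical} variable — legitimate in every coordinate of $\O=\T^d\times\R^{n-d}$, periodic ones included — so Cotlar--Stein yields $\|A\|=\|\sum_jA_j\|\lesssim C_0(a)$ after identifying $\sum_{j\le J}A_j\to A$ on Schwartz functions. In particular, for a symbol $c\in S^0$ this records the quantitative bound
\[
\|\op[c]\|\ \lesssim\ \sum_{|\alpha|,|\gamma|\le n+1}\ \sup_{x,\xi}\,(1+|\xi|)^{|\gamma|}\,|\partial_x^\alpha\partial_\xi^\gamma c|,
\]
which is what the refined estimate \eqref{e:seeley} calls on.

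For \eqref{e:seeley} the plan is an iteration of the $C^*$-identity. Set $M=\sup_{x\in\O,\,\xi\in\O^*}|a(x,\xi)|$. Two preliminary observations: since $1-\chi_R$ has $\xi$-derivatives gaining $R^{-|\gamma|}$ supported in $|\xi|\sim R$, the symbol $a_R$ has $S^0$-seminorms bounded by those of $a$ uniformly in $R$; and since $a$ is $0$-homogeneous for $|\xi|>1$ while $d<n$, the directions $\{\xi/|\xi|:\xi\in\O^*,\ |\xi|>1\}$ are dense in $\S^{n-1}$, whence $\|a_R\|_{L^\infty(\O\times\R^n)}\le\sup_{|\xi|>1}|a|=\sup_{\theta\in\S^{n-1}}|a(x,\theta)|\le M$ for $R\ge1$. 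Now iterate $\|\op[a_R]\|^{2^k}=\|(\op[a_R]^*\op[a_R])^{2^{k-1}}\|$. Because $\op[a_R]$ and $\op[a_R]^*$ are quantizations of symbols supported in $\{|\xi|>R\}$, the symbolic composition calculus (classical, and on $\O$ reducing to $\R^n$ via \eqref{Kper}: a leading product plus an $S^{-1}$ remainder, itself supported in $\{|\xi|>R\}$, with seminorms controlled by products of the factors' $S^0$-seminorms) shows on iteration that the symbol of $(\op[a_R]^*\op[a_R])^{2^{k-1}}$ equals $|a_R|^{2^k}+\rho_{R,k}$ with $\rho_{R,k}\in S^{-1}$ supported in $\{|\xi|>R\}$; hence the $S^0$-seminorms of $\rho_{R,k}$ are $O_k(1/R)$ and, by the bound above, $\|\op[\rho_{R,k}]\|=O_k(1/R)$. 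On the other hand $\||a_R|^{2^k}\|_\infty\le M^{2^k}$, while distributing by Leibniz the at most $2(n+1)$ relevant derivatives over the $2^k$ copies of $a_R$ bounds the $S^0$-seminorms of $|a_R|^{2^k}$ by $(2^k)^{c}\,M^{2^k-c}\,C_0(a)^{c}$ for a constant $c=c(n)$, and so $\|\op[|a_R|^{2^k}]\|\lesssim (2^k)^{c}\,M^{2^k-c}\,C_0(a)^{c}$ by the bound above. Combining and letting $R\to\infty$,
\[
\limsup_{R\to\infty}\|\op[a_R]\|^{2^k}\ \lesssim\ (2^k)^{c}\,M^{2^k-c}\,C_0(a)^{c};
\]
taking $2^k$-th roots and then $k\to\infty$ sends the prefactor $(2^k)^{c}$ and the fixed power $C_0(a)^{c}$ to $1$ and $M^{2^k-c}$ to $M^{2^k}$, giving $\limsup_{R\to\infty}\|\op[a_R]\|\le M$, which is \eqref{e:seeley} (with $C=1$, a fortiori for any absolute constant $C$).

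The main obstacle in \eqref{e:seeley} is the bookkeeping: one must control the composition remainder $\rho_{R,k}$ and the seminorm growth of $|a_R|^{2^k}$ precisely enough that the $2^k$-th root actually collapses the constant to $\sup|a|$ rather than to something larger. Here the support condition $\supp a_R\subset\{|\xi|>R\}$ is essential — it makes the remainders genuinely $o_R(1)$ and not merely $O(1)$ — and the $0$-homogeneity together with $d<n$ is used to identify $\sup_{\R^n}|a_R|$ with $M$. A self-contained alternative to the iteration is a sharp G\aa rding inequality on $\O$ applied to the nonnegative symbol $M^2(1-\chi_R)-|a_R|^2$, whose Friedrichs-symmetrization remainder lies in $S^{-1}$ and is supported at frequencies $\gtrsim R$, hence has norm $o_R(1)$ by the first part; either this route, or the Cotlar--Stein step in the boundedness proof, ultimately requires the relevant classical inequalities in a form adapted to the mixed domain $\T^d\times\R^{n-d}$, which is the genuinely new technical burden.
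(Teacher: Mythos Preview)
Your plan is sound, but both halves take a genuinely different route from the paper's. For $L^2$-boundedness the paper does \emph{not} use Cotlar--Stein: it partitions $\O$ \emph{spatially} into unit boxes $\O_j$ with a subordinate partition of unity $\sum_j\phi_j$, splits $A=A'+A''$ into a near-diagonal part $A'$ (amplitude supported where $2\O_i\cap2\O_j\neq\emptyset$) and an off-diagonal part $A''$, bounds $A''$ by Schur via the kernel estimate \eqref{Kfar}, and bounds each localized piece $\phi_iA(\phi_j\cdot)$ of $A'$ by a direct Plancherel--Young argument, recording the quantitative estimate $\|\phi_iA(u\phi_j)\|_2\lesssim C_{n+1,n+1,0}(a)\|u|_{2\O_j}\|_2$. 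For \eqref{e:seeley} the paper again avoids symbolic calculus entirely: it mollifies $a$ in $x$ to band-limit $\widehat{a_L(\cdot,\xi)}$ to a fixed ball $B_L$, then \emph{discretizes in $\xi$} by tiling the sphere into cones $T_m$ with the $2^{n-1}$-fold intersection property; writing $u_m$ for the Fourier restriction of $u$ to the cone over $T_m$ with $|\xi|>R$, the functions $a_L(\cdot,\xi_m)u_m$ have Fourier supports in $B_L+\supp\widehat{u_m}$, hence are almost orthogonal once $R\gg L$, and one reads off $\|\sum_m a_L(\cdot,\xi_m)u_m\|_2^2\le c_n\|a\|_\infty^2\|u\|_2^2$. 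Your H\"ormander $C^*$-iteration is more conceptual and delivers the sharp constant $C=1$, but it presupposes a full composition and adjoint calculus on the mixed domain $\O$ (the paper records only the very limited rule \eqref{left-q}); the paper's construction is more elementary precisely because it sidesteps that machinery. Your density remark (that $\{\xi/|\xi|:\xi\in\O^*\}$ is dense in $\S^{n-1}$ since $d<n$, so $\sup_{\O^*}|a|=\sup_{\S^{n-1}}|a|$) is a point the paper leaves implicit.
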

\begin{proof} Let us partition $\O = \cup_{j=1}^\infty \O_j$ into disjoint boxes of side length $2\pi$, e.g. stacking with boxes $\T^d$. Let $1 = \sum_{j} \phi_j(y)$ be a partition of unity subordinate to the cover with sets $2 \O_j$. We can choose $\phi_j$ having uniformly bounded any number of derivatives. Let us split $A = A' + A''$, where $A'$ has amplitude $a'= a(x,y,\xi) \sum_{i,j: 2\O_i \cap 2\O_j \neq \emptyset}\phi_i(x) \phi_j(y)$, and $A''$ has $a''=a(x,y,\xi)\sum_{i,j: 2\O_i \cap 2\O_j = \emptyset}\phi_i(x) \phi_j(y)$. The kernel of $A''$ is thus supported off the diagonal, and hence in view of \eqref{Kfar} enjoys a convolution-type globally integrable majorant. Clearly $A''$ is bounded. Moreover, using \eqref{e:trunc} with $p=n+1$ for $a''$ shows that $\|\op[(1-\chi_R)a''\| \leq O(1/R) \ra 0$, as $R \ra \infty$.

As to $A'$ we observe that $a'$ is properly supported in the band $|x-y|<2$. Thus, $\supp A'u \ss \supp u + B_2$, where $B_r$ is the ball of radius $r$. By the finite intersection consideration, we have
\begin{equation}\label{e:finitinter}
\| A' u\|_{L^2}^2 \lesssim \sum_{i,j: 2\O_i \cap 2\O_j \neq \emptyset} \|\phi_i A (u\phi_j)\|_{L^2}^2.
\end{equation}
The amplitudes  $\phi_i(x) a(x,y,\xi) \phi_j(y)$ are supported on $(2\O_j+B_2 )\times 2\O_j \times \R^n$ with uniformly bounded derivatives in $(x,y)$. 
Thus, we have
\begin{equation}\label{}
|\widehat{\phi_i a \phi_j}^{x,y}(\eta_1, \eta_2,\xi)| \lesssim  \frac{ C_{n+1,n+1,0}(a)}{(1+|\eta_1|)^{n+1}(1+|\eta_2|)^{n+1}},
\end{equation}
for all $\eta_1,\eta_2,\xi \in \O^*$. Consider $u \in C^\infty_0(\O)$. By Plancherel,
\begin{equation}\label{}
\begin{split}
&\|\phi_i A (u\phi_j)\|_{L^2}^2 = \\
&=\int_{\O^*} \left| \int_{\O^*\times \O^*} \widehat{\phi_i a\phi_j}(\xi-\eta_1,\xi-\eta_2,\xi)\ \widehat{\rest{u}{2\O_j}}(\eta_2)\ d\eta_2\ d\xi  \right|^2 d \eta_1\\
&\lesssim C_{n+1,n+1,0}(a)   \int_{\O^*} \left| \int_{\O^*} \frac{1}{(1+|\xi-\eta_1|)^{n+1}} \int_{\O^*}\frac{1}{(1+|\xi-\eta_2|)^{n+1}}
|\widehat{\rest{u}{2\O_j}}(\eta_2)|\ d\eta_2\ d\xi  \right|^2 d \eta_1  \\
& =  C_{n+1,n+1,0}(a) \| h \star h \star \widehat{\rest{u}{2\O_j}} \|_{L^2}^2,
\end{split}
\end{equation}
 where $h(\xi) = \frac{1}{(1+|\xi|)^{n+1}}$, an integrable kernel. By Young,
\begin{equation}\label{e:Abound}
 \|\phi_i A (u\phi_j)\|_{L^2}^2 \lesssim C_{n+1,n+1,0}(a) \| \widehat{\rest{u}{2\O_j}} \|_{L^2}^2.
\end{equation}
Summing up over $i,j$ (where for each $j$ there are only finite number of $i$), we obtain $\| A u\|_{L^2} \lesssim C_{n+1,n+1,0}(a) \|u\|_{L^2}$. 

 To show \eqref{e:seeley}, since we already know that  $\|\op[(1-\chi_R)a''\|  \ra 0$, we can focus on the kernel $(1-\chi_R(\xi)) a'(x,\xi)$, or in view of \eqref{e:finitinter} only on  $(1-\chi_R(\xi))  \phi_i(x) a(x,\xi) \phi_j(y)$, for $i,j$ such that $2\O_i \cap 2\O_j \neq \emptyset$.

First, we truncate $a$ on the Fourier side in $x$. Let $\psi \in C^\infty_0(B_1)$ be nonnegative with $\phi(0) = 1$ and let $\psi_L(\xi) = \psi(\xi/L)$. Then $\f_L = \check{\psi_L}$ is a standard mollifier on $\O$. Consider $a_L(x,\xi) = (\f_L \star a(\cdot, \xi)) (x)$. Then, by regularity $C_{n+1,n+1,0}(a-a_L) \ra 0$ as $L\ra \infty$. Consequently, in view of \eqref{e:Abound}, $\| \op[(1-\chi_R) \phi_i ( a -a_L )\phi_j] \| \ra 0$ uniformly in $R$. We thus can focus on $\op[(1-\chi_R) \phi_i a_L \phi_j]$ only. Let us fix another scale $\d>0$. Again by regularity of the symbol $a_L$ in $\xi$ we know that 
\begin{equation}\label{xi-Lip}
\sup_{x\in \O} | \p_x^\a( a_L(x,\xi_1) - a_L(x,\xi_2))| \leq C \d, \text{ for all } \xi_1, \xi_2 \in \S \text{ with } |\xi_1 - \xi_2| <\d.
\end{equation}
 
We will now discritize $a_L$ is $\xi$ as follows. First, 
the unit sphere of $\R^n$ can be partitioned into a finite number of
tiles $T_1,\ldots,T_M$, $M = M(n,\d)$, of diameter less than $\d$,
so that any boundary point is shared by at most $2^{n-1}$ of the tiles. 
This can be achieved by slicing the cube $[-1,1]^n$ with the hyperplanes
$$
\{(x_1,\ldots,x_n): x_i= \d k, \quad k=-1/\d, \ldots ,1/\d\}.
$$
to produce a family of tiles on the surface of the
cube with the required intersection property. Then the radial projection onto the sphere yields with desired tiling. Let us fix a tag point $\xi_m \in T_m$ in each tile. Let us now consider the symbol 
\[
a_{L,\d} (x,\xi) = \sum_{m=1}^M a_L(x,\xi_m) I_{\{\xi/|\xi| \in T_m\}}.
\]
In view of \eqref{xi-Lip}, $C_{n+1,n+1,0}(a_L - a_{L,\d}) \lesssim \d$, implying similar bound on the operators by \eqref{e:Abound} uniformly . So, we reduce the problem to showing that for any fixed $L>0$ and $\d$, 
\begin{equation}\label{}
\limsup_{R \ra \infty} \| (1-\chi_R) \phi_i a_{L,\d} \phi_j] \|  \leq c \sup_{x\in \O, \xi \in \O^*} |a(x,\xi)|.
\end{equation}
So, let us fix $u \in L^2(2 \O_j)$. We can incorporate $\phi_j$ into $u$, since $\|\phi_j u \| \leq \|u\|$. We have
\begin{equation}\label{}
\begin{split}
\op[(1-\chi_R) \phi_i a_{L,\d}] u (x) & = \phi_i(x)\int_{\O^*} a_{L,\d}(x,\xi) (1- \chi_R(\xi)) \hat{u}(\xi) e^{i x \cdot \xi} d\xi \\
& = \phi_i(x) \sum_{m=1}^M a_L(x,\xi_m) \int_{\xi/|\xi| \in T_m}  (1- \chi_R(\xi)) \hat{u}(\xi) e^{i x \cdot \xi} d\xi.
\end{split}
\end{equation}
Denote 
\[
u_m (x) = \int_{\xi/|\xi| \in T_m}  (1- \chi_R(\xi)) \hat{u}(\xi) e^{i x \cdot \xi} d\xi.
\]
We know that the Fourier supports of $u_m$ are disjoint and lie in the region $|\xi| >R/2$. In particular, $\| \sum_m u_m \|_2^2 = \sum_m \|u_m\|_2^2 \leq \|u\|_2^2$. Recall that the Fourier supports of $a_L(x,\xi_m)$ belong to the fixed ball $B_L$. Thus, the sets 
\[
\supp( \widehat{a_L(\cdot,\xi_m)}\star \widehat{ u_m } ) \ss \supp( \widehat{a_L(\cdot,\xi_m)}) + \supp(\widehat{u_m}) \ss B_L + \supp(\widehat{u_m})
\]
have the $2^{n-1}$-fold intersection property in $m$, when $R$ is sufficiently large. So, by Plancherel,
\begin{equation}\label{}
\begin{split}
\| \op[(1-\chi_R) \phi_i a_{L,\d}] u \|_2^2 &\leq \| \sum_m a_L(\cdot,\xi_m) u_m \|_2^2 \leq c_n \sum_m \|a_L(\cdot,\xi_m) u_m \|_2^2 \\
& \leq c_n \|a_L(\cdot,\xi_m) \|_\infty^2 \sum_m \| u_m \|_2^2 \leq c_n \|a\|_\infty^2 \|u\|_2^2.
\end{split}
\end{equation}
This finishes the proof.
\end{proof}  
  
Compactness of PDOs on a non-compact domain such as mixed open space we consider is a subtle issue. This due to the fact that simply a decay of $a(x,y,\xi)$ as $\xi \ra \infty$ is not enough as it is in compact settings, see \cite{shubin,treves}. To regain compactness under this condition one has to insist on decay in spacial variables as well. 
We will be concerned only with the case $a \in S^m$, $m <0$. So, $A$ is an integral operator with integrable kernel $K$. Let us first state a general compactness condition for such operators. So, let us consider somewhat more general integral operator
\begin{equation}\label{Aint}
Au(x) = \int_{Y} K(x,y) u(y) dy,   x \in X,
\end{equation}
where $X,Y \ss \R^n$. If $I_Y = \sup_{y\in Y} \int_X |K(x,y)|dx$ and $I_Y = \sup_{x\in X} \int_Y |K(x,y)|dy$ are finite, then by interpolation $A: L^2(Y) \ra L^2(X)$ is bounded, and 
\begin{equation}\label{Anorm}
\|A\|_{L^2(Y) \ra L^2(X)} \leq \sqrt{ I_X I_Y }.
\end{equation}

\begin{lemma}\label{l:compact} Suppose $I_X, I_Y < \infty$ and the following conditions hold
\begin{description}
\item[(decay at infinity)] 
\begin{equation}\label{decay1}
\lim_{R \ra \infty} \sup_{|y|>R} \int_{X} |K(x,y)| dx \cdot \sup_{x\in X} \int_{|y|>R} |K(x,y)| dy = 0,
\end{equation}
and
\begin{equation}\label{decay2}
\lim_{R \ra \infty} \sup_{|x|>R} \int_{Y} |K(x,y)| dy \cdot \sup_{y\in Y } \int_{|x|>R} |K(x,y)| dx = 0.
\end{equation}
\item[(smoothness)] For any $R>0$,
\begin{equation}\label{smooth-y}
\lim_{|z| \ra 0} \sup_{|x|<R} \int_{|y|<R}| K(x,y+z) - K(x,y) | dy = 0,
\end{equation}
or 
\begin{equation}\label{smooth-x}
\lim_{|z| \ra 0} \sup_{|y|<R} \int_{|x|<R}| K(x+z,y) - K(x,y) | dy = 0.
\end{equation}
\end{description}
Then $A: L^2(Y) \ra L^2(X)$ is compact.
\end{lemma}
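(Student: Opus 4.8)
The plan is to exhibit $A$ as a norm limit of compact operators, in two approximation steps: first truncate the kernel to a bounded region using the decay conditions, then mollify in one variable using the smoothness condition to reach a Hilbert--Schmidt operator. Before starting I would reduce to the case that \eqref{smooth-y} holds. Indeed, the adjoint $A^*\colon L^2(X)\to L^2(Y)$ has kernel $\overline{K(y,x)}$, compactness of $A$ and of $A^*$ are equivalent, and under the swap of the roles of $X$ and $Y$ the finiteness hypotheses $I_X,I_Y<\infty$ and the pair \eqref{decay1}--\eqref{decay2} are preserved, while \eqref{smooth-x} becomes exactly \eqref{smooth-y} for $A^*$. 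So assume \eqref{smooth-y} from now on.

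Next, fix smooth cutoffs $\chi_R\in C_c^\infty(\R^n)$ with $\chi_R\equiv1$ on $B_R$, $\supp\chi_R\ss B_{2R}$, $0\le\chi_R\le1$, and let $A_R=\op[\chi_R(x)K(x,y)\chi_R(y)]$. Using $1-\chi_R(x)\chi_R(y)=(1-\chi_R(x))+\chi_R(x)(1-\chi_R(y))$, the kernel of $A-A_R$ splits into a part supported in $\{|x|>R\}$ and a part supported in $\{|y|>R\}$; applying the Schur bound \eqref{Anorm} to each part gives $\|A-A_R\|\le\sqrt{L_1(R)}+\sqrt{L_2(R)}$, where $L_1(R)$ and $L_2(R)$ denote the left-hand sides in \eqref{decay1} and \eqref{decay2}, hence $\|A-A_R\|\to0$. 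Then I would mollify in the second variable: with $\rho\in C_c^\infty(B_1)$, $\rho\ge0$, $\int\rho=1$, $\rho_\e=\e^{-n}\rho(\cdot/\e)$, set $A_R^\e=\op[K_R^\e]$, $K_R^\e(x,y)=\int K_R(x,y-z)\rho_\e(z)\,dz$. Then $K_R^\e$ is supported in $B_{2R}\times B_{2R+1}$ with $\|K_R^\e(x,\cdot)\|_\infty\le\e^{-n}\|\rho\|_\infty I_Y$ and $\int|K_R^\e(x,\cdot)|\,dy\le I_Y$, so $\|K_R^\e\|_{L^2(X\times Y)}^2\le\e^{-n}\|\rho\|_\infty I_Y^2\,|B_{2R}|<\infty$; thus $A_R^\e$ is Hilbert--Schmidt, in particular compact.

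It remains to show $\|A_R-A_R^\e\|\to0$ as $\e\to0$. A change of variables gives $A_R^\e=\int\rho_\e(z)\,A_RT_{-z}\,dz$ with $T_cu(y)=u(y-c)$, so that $A_R-A_R^\e=\int\rho_\e(z)(A_R-A_RT_{-z})\,dz$ and $\|A_R-A_R^\e\|\le\sup_{|z|<\e}\|A_R-A_RT_{-z}\|$. The operator $A_R-A_RT_{-z}$ has kernel $K_R(x,y)-K_R(x,y-z)$, and I would bound its norm via \eqref{Anorm} \emph{asymmetrically}: the $x$-integral is only kept bounded, $\sup_y\int|K_R(x,y)-K_R(x,y-z)|\,dx\le2\sup_y\int|K_R(x,y)|\,dx\le2I_X$, while the $y$-integral is driven to $0$. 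For the latter, writing $K_R(x,y)-K_R(x,y-z)=\chi_R(x)\big[(K(x,y)-K(x,y-z))\chi_R(y)+K(x,y-z)(\chi_R(y)-\chi_R(y-z))\big]$, the first summand contributes at most $\sup_{|x|<2R+1}\int_{|y|<2R+1}|K(x,y+z)-K(x,y)|\,dy\to0$ by \eqref{smooth-y}, and the second at most $\|\nabla\chi_R\|_\infty|z|\,I_Y\to0$. Hence $\|A_R-A_RT_{-z}\|\le\sqrt{2I_X\,o_z(1)}\to0$ as $z\to0$, and therefore $\|A_R-A_R^\e\|\to0$.

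Putting the pieces together: given $\delta>0$, pick $R$ with $\|A-A_R\|<\delta/2$ and then $\e$ with $\|A_R-A_R^\e\|<\delta/2$, so $\|A-A_R^\e\|<\delta$; thus $A$ is a norm limit of the compact operators $A_R^\e$ and is compact. \textbf{The main obstacle} is precisely the step $\|A_R-A_R^\e\|\to0$: the Schur test appears to require modulus-of-continuity control of $K$ near the diagonal both integrated in $y$ and integrated in $x$, whereas only one of \eqref{smooth-y}, \eqref{smooth-x} is assumed. The resolution is that in the bound \eqref{Anorm} only one of the two factors must be made small while the other only needs to stay bounded — which $I_X<\infty$ provides — and that using smooth rather than sharp cutoffs makes the cutoff-commutator terms $O(|z|)$ and hence harmless.
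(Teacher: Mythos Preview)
Your proof is correct and follows essentially the same strategy as the paper's: approximate by a bounded-region truncation using the decay conditions together with the Schur bound \eqref{Anorm}, then mollify the truncated kernel in one variable to obtain a Hilbert--Schmidt approximant, controlling the mollification error by the asymmetric use of \eqref{Anorm} (one factor bounded by $I_X$, the other driven to zero by \eqref{smooth-y}). The only cosmetic differences are that the paper works with sharp restrictions to bounded subdomains $X',Y'$ rather than smooth cutoffs $\chi_R$, and handles the alternative \eqref{smooth-x} in place rather than via the duality reduction you give up front; your smooth cutoffs avoid any issue with the $\chi_R$-commutator terms, which you dispatch as $O(|z|)$.
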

\begin{proof} Let $X'\ss X$ and $Y' \ss Y$ be two arbitrary bounded subdomains, and 
 $A': L^2(Y') \ra L^2(X')$ be the restriction/projection of $A$. Let us show that $A'$ is compact. Let $\f_\e$ is a standard supported on $B_1$. Suppose that \eqref{smooth-y} holds for $R$ large enough to engulf $X',Y'$. Consider the $y$-mollified kernel $K_\e(x,\cdot) = \f_\e \star K(x,\cdot)$, and $A'_\e$ the integral operator with kernel $K_\e$. Since $K_\e$ is bounded, $K_\e \in L^2(X'\times Y')$, and hence $A'_\e$ is a compact Hilbert-Schmidt operator. On the other hand, 
\[
\| A'_\e - A' \|^2_{L^2(Y') \ra L^2(X')} \leq 2I_Y \sup_{|x|<R, |z|<\e} \int_{|y|<R}| K(x,y+z) - K(x,y) | dy \ra 0,
\]
as $\e \ra 0$. If the second smoothness condition \eqref{smooth-x} holds, we apply the same argument to the dual operator $(A')^* : L^2(X') \ra L^2(Y')$, and compactness follows by duality.

Now with $\e>0$ fixed, we choose $R\gg 1$ so that the expressions in \eqref{decay1} and \eqref{decay2} are smaller than $\e$. Then in view of \eqref{Anorm}, the norms of $A$ as an operator $L^2(|y|>R, y\in Y) \ra L^2(X)$ and $L^2(Y) \ra L^2(|x| >R, x\in X)$ 
are less than $\e$. Yet $A : L^2(|y|<R, y\in Y)\ra L^2(|x| <R, x\in X)$ is compact by the above. This finishes the proof.
\end{proof}

As a consequence of \lem{Kbound} and \lem{l:compact} we obtain the following result.

\begin{theorem}\label{t:compact} Let $a \in S^{m}(\O)$ with $m<0$ and $A$ be the PDO on $\O$ given by \eqref{mixedPDO}. Suppose that for all $R>0$,
\begin{equation}\label{e:decay}
\lim_{x,y \ra \infty, |x-y|<R}  \sup_{\xi \in \R^n, |\a| \leq n+1} (1+|\xi|)^{|\a| + m}| \p_\xi^\a a(x,y,\xi)| =0.
\end{equation}
Then $A: L^2(\O) \ra L^2(\O)$ is compact.
\end{theorem}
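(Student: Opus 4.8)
The strategy is to verify that the integral kernel $K(x,y)$ of $\op[a]$ satisfies the hypotheses of \lem{l:compact} with $X = Y = \O$, and then invoke that lemma directly. By \lem{Kbound}, $K \in C^\infty(\{x \neq y\})$ and for $|x-y|<1$ we have $|K(x,y)| \leq C_1 C(x,y)|x-y|^{-(n - m'\frac{n}{n+1})}$ while for $|x-y|>1$ we have $|K(x,y)| \leq C_2 C(x,y)|x-y|^{-(n+1)}$, where here $m' = -m > 0$ and $C(x,y) = \sup_{\xi, |\a| \leq n+1}(1+|\xi|)^{|\a| + m'}|\p_\xi^\a a(x,y,\xi)|$ is exactly the quantity whose decay on near-diagonal strips is postulated in \eqref{e:decay}. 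Since $a \in S^m(\O)$, $C(x,y)$ is also uniformly bounded, say by $C_0$. Because $n - m'\frac{n}{n+1} < n$, the near-diagonal singularity is integrable in $n$ dimensions, and the far-field bound is integrable at infinity; so first one checks $I_X = I_Y = \sup_y \int |K(x,y)|dx < \infty$, splitting the integral into $|x-y|<1$ and $|x-y|>1$ and using the two bounds with $C(x,y) \leq C_0$. This gives $L^2$-boundedness via \eqref{Anorm}, and reduces everything to the decay and smoothness conditions.

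For the decay conditions \eqref{decay1}--\eqref{decay2}: by the symmetry $x \leftrightarrow y$ in the bounds it suffices to treat \eqref{decay1}. Fix large $R$ and take $|y|>R$. Split $\int_\O |K(x,y)|dx$ over $|x-y|<1$ and $|x-y|>1$. On $|x-y|<1$ every relevant $x$ has $|x| > R-1$, so $C(x,y) \leq \varepsilon_R := \sup_{|x|,|y|>R-1,\,|x-y|<1} C(x,y)$, which by \eqref{e:decay} tends to $0$ as $R \to \infty$; thus this piece is $\leq C_1 \varepsilon_R \int_{|x-y|<1}|x-y|^{-(n-m'\frac{n}{n+1})}dx \lesssim \varepsilon_R$. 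On the far piece $|x-y|>1$ we only have the uniform bound $C(x,y)\leq C_0$, giving $\leq C_2 C_0 \int_{|x-y|>1}|x-y|^{-(n+1)}dx < \infty$ — this does not go to zero, but that is harmless: in \eqref{decay1} it is multiplied by $\sup_{x\in\O}\int_{|y|>R}|K(x,y)|dy$, and \emph{that} factor does tend to $0$ as $R \to \infty$. Indeed, for fixed $x$, either $|x| > R/2$ (so the near-diagonal part of $\int_{|y|>R}|K(x,y)|dy$ again sees only far-from-origin points and is $\lesssim \varepsilon_{R/2}$, while the far part is $\lesssim \int_{|x-y|>R/2}|x-y|^{-(n+1)}dy \lesssim R^{-1}$), or $|x| \leq R/2$, in which case $|y|>R$ forces $|x-y| > R/2 > 1$ and the whole integral is $\lesssim \int_{|x-y|>R/2}|x-y|^{-(n+1)}dy \lesssim R^{-1}$. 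So $\sup_x \int_{|y|>R}|K(x,y)|dy \to 0$, and the product in \eqref{decay1} vanishes in the limit. The same reasoning, with roles swapped, handles \eqref{decay2}.

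For the smoothness condition it suffices to establish \eqref{smooth-y}, i.e. equicontinuity of translates in $y$ on any fixed ball. Here one differentiates the kernel: $\p_{y_k} K(x,y) = \int_{\O^*} [\p_{y_k} a(x,y,\xi) + i\xi_k\, a(x,y,\xi)] e^{i(x-y)\cdot\xi}\dxi$. The term $\p_{y_k}a$ has an amplitude in $S^m$, so its kernel again obeys the bounds of \lem{Kbound}; the term $\xi_k a$ has an amplitude in $S^{m+1}$, and since $m < 0$ we have $m+1 \leq 1$, so we cannot directly apply \lem{Kbound} when $m + 1 \geq 0$. The clean fix is to prove \eqref{smooth-y} not via the gradient of $K$ but by the same truncation device used in \lem{Kbound}: write $K = K_{<} + K_{>}$ where $K_>$ is the part of the $\xi$-integral with a smooth cutoff $1-\chi_{R_0}(\xi)$ removed — i.e. split $a = \chi_{R_0}(\xi)a + (1-\chi_{R_0}(\xi))a$. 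The low-frequency part $\chi_{R_0}(\xi)a$ has compactly supported (in $\xi$) amplitude, so its kernel is smooth with $y$-derivative bounded by $C R_0^{n+1}$ uniformly, hence its translates are equicontinuous. The high-frequency part is controlled: estimate \eqref{e:trunc} (applied with the amplitude $a(x,y+z,\xi) - a(x,y,\xi)$, whose $C(x,y)$-type constant is $O(|z|)$ by the mean value theorem and the $S^m$ bounds on $\p_y a$) shows $\int_{|y|<R}|K_>(x,y+z)-K_>(x,y)|dy \lesssim |z|$ plus a tail that is $\lesssim$ the near-diagonal integral of $|x-y|^{-(n-m'\frac{n}{n+1})}$ times $\sup|a(x,\cdot,\xi)-$translate$|$, which is again $O(|z|)$ on a fixed ball since $a$ is uniformly continuous there. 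Letting $|z| \to 0$ with $R_0$ fixed and then $R_0 \to \infty$ gives \eqref{smooth-y}. With $I_X, I_Y < \infty$, the decay conditions, and the smoothness condition all verified, \lem{l:compact} yields that $A : L^2(\O) \to L^2(\O)$ is compact.

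The main obstacle is the last paragraph: unlike the local/compact case, one cannot simply cite Hilbert--Schmidt-ness or standard elliptic regularity, because the kernel is merely $L^1$-controlled near the diagonal (the order $m$ need not be below $-n$) and the domain is unbounded. One must juggle three scales — the near-diagonal radius, the frequency cutoff $R_0$, and the spatial-infinity radius $R$ — in the right order, and the bookkeeping in the decay estimates (recognizing that a non-vanishing factor is always paired with a vanishing one in \eqref{decay1}--\eqref{decay2}) is where care is needed. The hypothesis \eqref{e:decay} carrying $n+1$ derivatives in $\xi$ with the weight $(1+|\xi|)^{|\a|+m}$ is exactly calibrated so that $C(x,y)$ in \lem{Kbound} is the controlling quantity, which is the conceptual key that makes the whole scheme close.
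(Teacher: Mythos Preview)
Your approach is the same as the paper's: bound $K$ via \lem{Kbound} as $|K(x,y)| \lesssim C(x,y)\,F(x-y)$ with $F \in L^1$, then verify the hypotheses of \lem{l:compact}. The plan is right, but two steps in your execution are off.

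\textbf{Decay.} In your treatment of the second factor $\sup_{x}\int_{|y|>R}|K(x,y)|\,dy$, the case $|x|>R/2$ does not give the bound you claim: when $|x|>R/2$ and $|y|>R$ there is no lower bound on $|x-y|$ growing with $R$, so the ``far part'' is not $\int_{|x-y|>R/2}|x-y|^{-(n+1)}\,dy$; it is only $\int_{|x-y|>1}|x-y|^{-(n+1)}\,dy$, a fixed constant. The paper avoids this detour by showing directly that the \emph{first} factor $\sup_{|y|>R}\int_\O |K(x,y)|\,dx$ tends to zero: fix an auxiliary large scale $R'$ and split at $|x-y|=R'$. On $|x-y|<R'$ with $|y|$ large, also $|x|$ is large and $|x-y|<R'$, so $C(x,y)$ is small by \eqref{e:decay} (with the band $R'$ held fixed); on $|x-y|\geq R'$ one gets $C_0\int_{|z|>R'}F(z)\,dz$. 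Send $y\to\infty$ first, then $R'\to\infty$. The same two-scale device would also repair your argument for the second factor.

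\textbf{Smoothness.} You over-engineer \eqref{smooth-y}. The frequency truncation is unnecessary, and your treatment of the high-frequency piece is muddled (shifting $y$ moves both the amplitude \emph{and} the phase $e^{i(x-y)\cdot\xi}$, so the difference $K_>(x,y+z)-K_>(x,y)$ is not the kernel of the amplitude $a(x,y+z,\xi)-a(x,y,\xi)$ and \eqref{e:trunc} does not apply as written). The paper dispatches this in one line: on the bounded box $\{|x|<R,\ |y|<R\}$, excise a $\d$-tube around the diagonal. Off the tube $K$ is $C^\infty$ on a compact set, hence uniformly continuous, so the translate difference vanishes as $|z|\to 0$; on the tube the contribution is $\lesssim \int_{|w|<\d} F(w)\,dw$, which is small by \eqref{Knear}. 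Let $\d\to 0$ after $|z|\to 0$.
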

\begin{proof} 
Recall that the kernel $K$ of $A$ satisfies the pointwise estimates of \lem{Kbound}, i.e. $K(x,y) \lesssim  C(x,y) F(x-y)$, where $F\in L^1(\O)$ is independent of $a$.
It remains to verify that our condition \eqref{e:decay} implies the hypotheses of \lem{l:compact}. Let us fix $R$ large. Then
\[
\int_{\O} | K(x,y) | dx \lesssim \int_{|x-y|<R} C(x,y) F(x-y) dx +  \sup_{x,y}C(x,y) \int_{|x-y|>R}F(x-y)dx.
\]
In view of \eqref{e:decay} in the limit as $y \ra \infty$, the right hand side does not exceed $\frac{1}{R}\sup_{x,y}C(x,y)$, which can be made arbitrarily small. Also, 
$\sup_{x\in \O} \int_{|y|>R} |K(x,y)| dy $ is of course bounded. So, \eqref{decay1} holds, and \eqref{decay2} is proved similarly. Finally, conditions \eqref{smooth-y}, \eqref{smooth-x} readily follow from the local smoothness $K\in C^\infty(x\neq y)$ and near diagonal integrability condition \eqref{Knear}.
\end{proof}
This finishes our general discussion of PDOs on mixed domains.

\section{Spectrum of the linearization}
The governing equations of a non-homogeneous ideal fluid are given by \eqref{e:RT}.
We assume the fluid is confined to the strip $\O = \T \times \R$, and is periodic in the first coordinate $x_1$. Let $u_0 = \lan U(x_2), 0 \ran$, $\r_0 = \r_0(x_2)$ be fixed. We assume 
\begin{equation}\label{dec}
U, \r_0 \in C^{3}(\R), \text{ and } U', U'', \r_0' , \r_0'' \ra 0 \text{ as } |x_2| \ra \infty,
\end{equation}
and 
\begin{equation}\label{e:novac}
\r_0>0, \quad \lim_{x_2 \ra \pm \infty} \r_0(x_2) = \r_{\pm} > 0.
\end{equation}
The pair $(u_0,\r_0)$ is a steady state solution to \eqref{e:RT} with the hydrostatic  pressure given by $\n p_0 =- g \r_0$.  The linearization around the steady state is given by \eqref{e:LRT}. We assume that the perturbation $(v,r)$ is periodic in $x_1$ direction, and has zero mean, 
\begin{equation}\label{e:mean0}
\int_\T v(x_1,x_2) \dx_1 = \int_\T r(x_1,x_2) \dx_1 = 0,
\end{equation}
for all $x_2\in \R$.  So, the Fourier support of perturbation belongs to $\O_0^* = \{(\xi_1, \xi_2): \xi_1 \in \Z \bs \{0\}, \xi_2 \in \R\}$, which remains away from the origin. Notice that this condition is preserved by the system \eqref{e:LRT}. In what follows, we will encounter amplitudes $a(x,y,\xi)$ depending on the vertical spacial coordinate only $a = a(x_2,y_2,\xi)$, and are globally $m$-homogeneous in $\xi$. The latter fact makes such an amplitude singular at the origin, but in view of the mean-zero condition \eqref{e:mean0} the PDO \eqref{mixedPDO} takes form 
\begin{equation}\label{mixedPDO2}
Au(x) = \int_{\O_0^*}\int_{\O} a(x_2,y_2,\xi)e^{i(x-y)\cdot \xi} u(y) dy d\xi,
\end{equation}
which restricts to the region $|\xi| \geq 1$ in the outer integral. This allows us to replace the amplitude with $(1-\chi(\xi)) a(x_2,y_2,\xi)$ without changing the action of the operator making the new amplitude locally smooth and of proper class $S^m$. We assume from now on that such modification has been made every time we consider an operator  \eqref{mixedPDO2} without altering notation for the amplitude. 

Since $a$ depends on the vertical coordinate, and integration in $\xi_2$ is continuous, one can  integrate by parts and find that the new amplitude 
\begin{equation}\label{left-q}
a(x,x,\xi) + \int_0^1 \p_{\xi_2} \p_{y_2} a(x, sx+ (1-s)y, \xi) ds = a_0(x,\xi) + r(x,y,\xi).
\end{equation}
defines the same operator. 

\subsection{Recovery of the pressure}
Since $v_t + u_0 \cdot \n v - v \cdot \n u_0$ is divergence-free, taking divergence of the momentum equation, we obtain
\begin{equation}\label{orig-div}
2 \diver( v\cdot \n u_0) + \diver(\r_0^{-1} \n q) = \diver(\vec{g} \r_0^{-1} r).
\end{equation}
Since the form $\int_\O \r_0^{-1} \n q_1 \cdot \n q_2 \dx$ is coercive on $H^1(\O)$ subject to the horizontal mean zero condition, by the Riesz representation theorem, we have a well-defined linear bounded map $Q(v,r) = \n q$ from $L^2 \times L^2$ to $L^2$.  We now in a position to track down the principal symbol of this map with compactness control on the remainder operators. We have
\[
2\r_0 \diver( v\cdot \n u_0) + \D q  - \r_0^{-1} \n \r_0 \cdot \n q = - g \p_{2} r + g \r_0^{-1} \r_0' r.
\]
So,
\[
\n q = - 2\n \D^{-1} \r_0 \diver( v\cdot \n u_0) + \n \D^{-1} \r_0^{-1} \n \r_0 \cdot \n q
 - g \n \D^{-1} \p_{2} r + g \n \D^{-1} \r_0^{-1} \r_0' r
\]
Now, $\n \D^{-1} \r_0^{-1} \n \r_0 \cdot \n q$ is a composition of $Q$ and a PDO with the amplitude $a(x,y,\xi) =  \frac{\xi}{|\xi|^2} \frac{\n \r_0(y) }{\r_0(y)}$. It clearly satisfies the hypothesis of our \lem{l:compact}. So, this term contributes a compact operator. For the same reason the map $r \ra g \n \D^{-1} \r_0^{-1} \r_0' r$  is compact too. We have
\[
\n q = - 2\n \D^{-1} \r_0 \diver( v\cdot \n u_0) - g \n \D^{-1} \p_{2} r  + B_1(v,r),
\]
where $B_1: L^2(\O) \times L^2(\O) \ra L^2(\O)$ is compact. Next, $\n \D^{-1} \r_0 = \r_0 \n \D^{-1} + B_2$, where according to \eqref{left-q}  $B_2$ has amplitude $a_2(x,y,\xi) = \p_{\xi_2}(\xi |\xi|^{-2}) \int_0^1 \r_0'(sx + (1-s)y) ds$. So, we have
\begin{equation}\label{}
\begin{split}
B_2( \diver( v\cdot \n u_0) )(x) & = \int_{\O_0^*}\int_{\O} a_2(x,y,\xi)e^{i(x-y)\cdot \xi} \diver_y( v\cdot \n u_0)(y) dy d\xi \\
& =  - \int_{\O_0^*}\int_{\O} \n_y a_2(x,y,\xi)  \p u_0(y) v(y) e^{i(x-y)\cdot \xi}dy d\xi \\
&+
\int_{\O_0^*}\int_{\O} a_2(x,y,\xi) \xi \cdot   \p u_0(y) v(y) e^{i(x-y)\cdot \xi}dy d\xi.
\end{split}
\end{equation}
Under the assumption \eqref{dec} the two amplitudes $\n_y a_2(x,y,\xi)  \p u_0(y)$ and $a_2(x,y,\xi) \xi \cdot   \p u_0(y) $ clearly satisfy \lem{l:compact}. We thus obtain
\[
\r_0^{-1} \n q = - 2 \n \D^{-1} \diver( v\cdot \n u_0) - g \r_0^{-1} \n \D^{-1} \p_{2} r  + B_3(v,r),
\]
where $B_3$ is compact. Finally we  left-quantize the first PDO by replacing its amplitude $-2|\xi|^{-2} \xi \otimes \xi \p u_0(y)$ with (according to \eqref{left-q})
\[ 
-2|\xi|^{-2} \xi \otimes \xi \p u_0(x) -2 \p_{\xi_2}(|\xi|^{-2} \xi \otimes \xi ) \int_0^1 \p u'_0(s x+(1-s) y) ds.
\]
The latter is a symbol of class $S^{-1}$ decaying as $x$ and $y$ go to $+\infty$ or $-\infty$ simultaneously, which is sufficient for \lem{l:compact}. It thus contributes a compact term. Putting all these together we can rewrite our original system \eqref{e:LRT} in the advective form
\begin{equation}\label{e:adv}
V_t + u_0 \cdot \n V = A_0 (V) + B(V),
\end{equation}
where $V = \bigl(\begin{smallmatrix}
v \\ r
\end{smallmatrix} \bigr)$ is the state variable, $B$ is compact, and $A_0$ is left-quantized PDO with a matrix symbol given by
\[
a_0(x,\xi) = \left[\begin{array}{cc}- \p u_0(x) + 2 \frac{\xi \otimes \xi}{|\xi|^2} \p u_0(x)  &  \frac{g}{\r_0(x)} \frac{\xi_1 \xi^{\perp}}{|\xi|^2}\\ \n \r_0(x) & 0\end{array}\right]: \C^3 \ra \C^3,
\]
where $\xi^{\perp} = \lan \xi_2, - \xi_1 \ran$. The system is subject to constraints
\begin{equation}\label{constraint}
\diver v = 0, \quad \int_\T V(x_1,x_2) d x_1 = 0.
\end{equation}

\subsection{Microlocal structure of the semigroup}
Let $G_t : \Ldiv \ra \Ldiv$ be the semigroup (in fact group) generated by the system \eqref{e:adv}. Here $\Ldiv$ stands for the space of $L^2$-integrable fields $V$ satisfying \eqref{constraint}.  Let $\cC$ denote the Calkin algebra over $\Ldiv$, i.e. the space of bounded operators over $\Ldiv$ modulo compact, endowed with the natural factor-norm. By definition, $r_{\ess}(G_t, \Ldiv)$ is the spectral radius of $G_t$ as an element of the $C^*$-algebra $\cC$.  By the classical Nussbaum Theorem, \cite{nuss}, $r_{\ess}$ coincides with the radius of the Browder spectrum as well as the Fredholm spectral radius.

Similar to the periodic case considered in \cite{S-ess} we seek to describe the PDO structure of $G_t$ up to a compact operator. As in \cite{S-ess} one expects that the essential dynamics of \eqref{e:LRT} is governed by the finite dimensional dynamical system given by
\begin{equation}\label{bas}
\left\{\begin{split}
x_t & = u_0(x) \\
\xi_t & = - \p u_0^\top(x) \xi \\
b_t &= a_0(x,\xi)b.
\end{split}\right.
\end{equation}
To recall, \eqref{bas} represents the leading order dynamics written in Lagrangian coordinates of the evolution of the WKB ansatz
\[
V(x,t) = b(x,t) e^{i S(x,t)/\e} + O(\e),
\]
where $\xi = \n_x S$ and $(b_1,b_2) \cdot \xi = 0$. The first two equations of \eqref{bas} determine the flow on the cotangent bundle of $\O$, i.e. $T^*\O = \O \times \R^2$. With our stratification  case, the flow is explicitly given by 
\begin{equation}\label{}
\chi_t(x_1,x_2;\xi_1,\xi_2) = (U(x_2) t +x_1, x_2; \xi_1, - U'(x_2) \xi_1 t+ \xi_2).
\end{equation}
The $b$-equation is the non-autonomous system of the flow $\chi_t$:
\begin{equation}\label{ODE0}
b_t = a_0(\chi_t(x_0,\xi_0))b.
\end{equation}
Notice that the orthogonality condition $(b_1,b_2) \cdot \xi = 0$ is preserved. So, the system \eqref{bas} generates a cocycle (fundamental solution) $B_t(x,\xi)$ over the flow $\chi_t$ acting on the fiber bundle $\cF$ over $ \O \times (\R^2 \bs \{0\})$ with fibers given by $F(x,\xi)= F(\xi) = \{ b\in \C^3: (b_1,b_2) \cdot \xi = 0\}$. Note that $\cF$ is smooth. It will significantly simplify the arguments to view $B_t$ as a restriction to $\cF$ of a ``free" cocycle, also denoted $B_t$, obtained by considering the ODE \eqref{ODE0} with unrestricted initial condition $b_0 \in \C^3$. This way, $B_t(x,\xi)$ can be viewed as a $(x,\xi)$-dependent $3\times 3$ matrix, for which we can make sense of partial derivatives without resorting to covariant differentiation. Now, it follows directly from the form of the symbol $a_0$ and the classical ODE theory that $\{B_t(x,\xi)\}_{t \geq 0, x\in \O, \xi \neq 0}$ is smooth, $0$-homogeneous in $\xi$, depends spacialy only on $x_2$ coordinate, and remains uniformly smooth on any finite time  interval, i.e. for all $\a,\b$, $\sup_{t<T, x\in \O, \xi \in \S} \| \p_x^\a \p_\xi^\b B_t(x,\xi)\| < \infty$. In particular, it defines a matrix symbol of class $S^0(\O)$. We need to establish stabilization at infinity to conclude that the symbol $\p_\xi \p_{x_2} B_t(x,\xi)$ defines a compact operator. It will be a consequence of \thm{t:compact} and the following.
\begin{lemma}\label{l:stab}
For every $\b$ and $T$, one has
\begin{equation}\label{}
\lim_{x_2 \ra \pm \infty} \sup_{0<t<T, \xi \in \S} \| \p_\xi^\b \p_{x_2} B_t(x,\xi) \| = 0.
\end{equation}
\end{lemma}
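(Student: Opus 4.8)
The plan is to differentiate the linear ODE system \eqref{ODE0} in the parameters $\xi$ and $x_2$, obtaining inhomogeneous linear ODEs for the derivative matrices $\p_{x_2} B_t$ and $\p_\xi^\b \p_{x_2} B_t$, and then to show that the forcing terms in these equations decay as $x_2 \to \pm\infty$ while the homogeneous part (the cocycle $B_t$ itself) stays uniformly bounded on $[0,T]$. Integrating the variation-of-constants formula then forces the derivative to be small. The key observation is that the symbol $a_0(x,\xi)$ depends on $x$ only through $x_2$, and \emph{every} $x_2$-derivative of $a_0(\chi_t(x,\xi))$ produces a factor among $U'(x_2), U''(x_2), \r_0'(x_2), \r_0''(x_2)$ (or their products), all of which tend to $0$ as $|x_2|\to\infty$ by the standing assumption \eqref{dec}.

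First I would set $C_t(x,\xi) = \p_{x_2} B_t(x,\xi)$. Differentiating \eqref{ODE0} in $x_2$ gives
\begin{equation}\label{e:Cteq}
C_t' = a_0(\chi_t(x,\xi)) C_t + \bigl(\p_{x_2}[a_0(\chi_t(x,\xi))]\bigr) B_t, \qquad C_0 = 0,
\end{equation}
where the chain rule yields $\p_{x_2}[a_0(\chi_t)] = (\p_{x_2}a_0)(\chi_t)\, \p_{x_2}\chi_t^{(x)} + (\p_{\xi_2}a_0)(\chi_t)\, \p_{x_2}\chi_t^{(\xi)}$. Reading off $\chi_t$, the spatial Jacobian component $\p_{x_2}\chi_t^{(x)}$ involves $U'(x_2)t$, and the frequency component $\p_{x_2}\chi_t^{(\xi)}$ involves $U''(x_2)\xi_1 t$; moreover $\p_{x_2}a_0$ and $\p_{\xi_2}a_0$, evaluated along $\chi_t$, carry factors $\r_0'$, $\r_0''$, $(\log\r_0)'$ and $U'$ at the point $x_2$ (the shear $U$ never appears undifferentiated in $a_0$). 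Hence on $0<t<T$, $\xi\in\S$, the inhomogeneous term in \eqref{e:Cteq} is bounded by $\o(x_2)$ for some function $\o(x_2)\to 0$ as $x_2\to\pm\infty$, uniformly, where I have also used that $\|B_t\|$ is uniformly bounded on $[0,T]$ (this is already established in the excerpt, $B_t\in S^0(\O)$). By Gronwall applied to \eqref{e:Cteq}, $\sup_{t<T,\xi\in\S}\|C_t(x,\xi)\| \leq T e^{T\|a_0\|_\infty}\,\o(x_2) \to 0$, which is the $\b=0$ case.

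For general $\b$ one iterates: differentiating \eqref{e:Cteq} in $\xi$ repeatedly produces linear ODEs whose homogeneous part is again $a_0(\chi_t)\,(\cdot)$ and whose forcing is a sum of products of lower-order derivatives $\p_\xi^{\b'}\p_{x_2}^{j} B_t$ with $j\le 1$ — and crucially every term in the forcing still contains at least one undifferentiated-in-$\xi$ occurrence of a derivative of $U$ or $\r_0$ evaluated at $x_2$, hence a decaying factor; the $\xi$-derivatives falling on the homogeneous-degree-zero pieces of $a_0$ and on $\chi_t$ stay uniformly bounded for $\xi\in\S$, $t<T$. Proceeding by induction on $|\b|$, using the already-proved smaller cases together with the uniform bounds $\sup_{t<T,x,\xi\in\S}\|\p_x^\a\p_\xi^{\b'} B_t\|<\infty$ and Gronwall, I conclude $\sup_{t<T,\xi\in\S}\|\p_\xi^\b\p_{x_2}B_t(x,\xi)\|\to 0$ as $x_2\to\pm\infty$. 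The main obstacle is purely bookkeeping: organizing the multivariate chain rule so that one can verify that \emph{no} term in any of the forcing expressions loses its decaying coefficient — once the structure ``every $x_2$-derivative of $a_0\circ\chi_t$ extracts a factor $U'$, $U''$, $\r_0'$ or $\r_0''$ at $x_2$'' is isolated, the analytic content is a routine Gronwall estimate.
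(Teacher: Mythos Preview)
Your proposal is correct and follows essentially the same route as the paper: differentiate the cocycle equation in $x_2$, use the zero initial condition together with Duhamel/Gronwall, observe that the forcing $\p_{x_2}(a_0\circ\chi_t)$ is controlled by $|\r_0'|+|\r_0''|+|U'|+|U''|\to 0$, and then induct on $|\b|$. The only cosmetic differences are that the paper writes the explicit Duhamel integral $\p_{x_2}B_t=\int_0^t B_{t-s}(\chi_s)\,\p_{x_2}(a_0\circ\chi_s)\,B_s\,ds$ and differentiates that in $\xi$ for the induction, whereas you differentiate the ODE itself and appeal to Gronwall; one small clarification worth making is that in your inductive step the forcing terms of the form $\p_\xi^{\b''}(a_0\circ\chi_t)\cdot\p_\xi^{\b'}\p_{x_2}B_t$ with $|\b'|<|\b|$ do not themselves carry a factor of $U'$ or $\r_0'$---their decay comes from the inductive hypothesis on $\p_\xi^{\b'}\p_{x_2}B_t$, which you do invoke, so the argument goes through.
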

\begin{proof}
Let us assume first $\b = 0$, and  differentiate the $b$-equation in $x_2$:
\begin{equation}\label{}
\p_t \p_{x_2} B_t(x,\xi) = \p_{x_2}(a_0(\chi_t(x,\xi))) B_t(x,\xi) + a_0(\chi_t(x,\xi))\p_{x_2}B_t(x,\xi).
\end{equation}
Since, clearly $\p_{x_2} B_0(x,\xi) = \p_{x_2} Id =0$, by Duhamel principle we obtain
\begin{equation}\label{b-duh}
\p_{x_2} B_t(x,\xi) = \int_0^t B_{t-s}( \chi_s(x,\xi))\p_{x_2}(a_0(\chi_s(x,\xi))) B_s(x,\xi)ds.
\end{equation}
Since by uniqueness $\inf_{t<T, \xi \in \S} |\xi(t)| > 0$, a routine computation shows that 
\[
\sup_{s<T, \xi \in \S} |\p_{x_2}(a_0(\chi_s(x,\xi))) | \lesssim |\r_0'(x_2)|+|\r_0''(x_2)|+|U'(x_2)|+|U''(x_2)| \ra 0.
\]
This establishes \lem{l:stab} for $\b = 0$. The case $|\b|>0$ goes by induction, differentiating \eqref{b-duh} and using the fact that for any $\b$, $\sup_{s<T, \xi \in \S} |\p_\xi^\b \p_{x_2}(a_0(\chi_s(x,\xi))) |$ is controlled by the same $ |\r_0'(x_2)|+|\r_0''(x_2)|+|U'(x_2)|+|U''(x_2)| $. 

\end{proof}

Let $\pi(\xi): \C^3 \ra F(\xi)$ be the orthogonal projection. Note that $\pi \in S^0$. Let $\Pi = \op[\pi] :L^2(\O) \ra \Ldiv$ also be the orthogonal projection. Here as before $L^2(\O)$ denotes the space with zero mean condition in horizontal direction.
Let $\f_t : \O \ra \O$ be the integral flow of $u_0$, i.e. $\f_t(x_1,x_2) = (U(x_2)t + x_1, x_2)$.

\begin{lemma}\label{l:str} For each $t$ the action of the semigroup $G_t$ is given by 
\begin{equation}\label{e:micro}
G_t u = \Pi [ (\op[B_t] u) \circ \f_{-t} ] + K_t u,
\end{equation}
where $K_t$ is a compact operator on $\Ldiv$, and 
\[
\op[B_t] u(x) = \int_{\O^*_0} B_t(x,\xi) \hat{u}(\xi) e^{i \xi \cdot x} d \xi.
\]
\end{lemma}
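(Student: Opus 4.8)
The plan is to verify that $G_t$ as written solves the same evolution equation as the true semigroup, modulo compact errors, and then invoke uniqueness. Concretely, let $W_t u := \Pi[(\op[B_t] u)\circ \f_{-t}]$. I would first record that $\Pi G_t = G_t$ (the group preserves the divergence-free, mean-zero constraint) and $\Pi W_t = W_t$ by construction, so it suffices to show $\frac{d}{dt}(W_t u) = A_0 W_t u + u_0 \cdot \n(\text{correction}) + (\text{compact})$, matched against \eqref{e:adv}. The natural route is Duhamel: set $R_t := G_t - W_t$ and show $R_t$ is compact by writing $R_t u = \int_0^t G_{t-s}\,\partial_s(W_s u)\,ds - \int_0^t G_{t-s}(\text{stuff}) ds$ after establishing that $\partial_s W_s - (\text{generator of } G)\,W_s$ is a compact-operator-valued family, uniformly on $[0,T]$; then $R_t$ is an integral (in the strong operator topology) of compacts against the bounded family $G_{t-s}$, hence compact. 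So the heart of the matter is the \emph{infinitesimal} computation: differentiating $W_s u = \Pi[(\op[B_s]u)\circ\f_{-s}]$ in $s$.

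For that differentiation I would proceed in stages. First, $\frac{d}{ds}[(\op[B_s]u)\circ \f_{-s}] = (\op[\partial_s B_s]u)\circ\f_{-s} - (u_0\cdot\n)[(\op[B_s]u)\circ\f_{-s}] + [\text{commutator of } \op[B_s] \text{ with the flow pullback}]\circ\f_{-s}$. The transport term $-u_0\cdot\n$ is exactly what we want on the left of \eqref{e:adv}. The term $\op[\partial_s B_s]u$: here I use the $b$-equation $\partial_s B_s(x,\xi) = a_0(\chi_s(x,\xi)) B_s(x,\xi)$, not $a_0(x,\xi)B_s(x,\xi)$ — so I must replace $a_0(\chi_s(x,\xi))$ by $a_0$ evaluated at the correct phase-space point. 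The discrepancy between $\op[a_0(\chi_s(\cdot,\cdot))B_s]$ and the composition $A_0 \circ \op[B_s]$ pulled through the flow is, by the standard symbol-calculus-along-a-flow identity used in \cite{S-ess} and \cite{V}, precisely accounted for by the first-order term in the composition expansion, which is of order $-1$ and — crucially — depends spatially only on $x_2$ with derivatives controlled by $\r_0', \r_0'', U', U''$, hence decays at $x_2 \to \pm\infty$. By \thm{t:compact} this is a compact operator. The commutator-with-the-flow term is handled the same way: conjugation of a PDO by the pullback $u\mapsto u\circ\f_{-s}$ changes the symbol to leading order by the obvious coordinate change (here $\f_s$ is just a shear, so this is clean), and the subleading piece is again order $-1$, $x_2$-dependent, decaying at infinity, hence compact by \thm{t:compact}. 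Finally the outer $\Pi$: one moves $\Pi$ inside, using $\Pi A_0 = A_0$ on the range and that $[\Pi, \text{transport}]$ has symbol $\{\pi, u_0\cdot\xi\}/i + \dots$ of negative order, $x$-independent in a way that... actually $\pi$ depends only on $\xi$, so $[\Pi, u_0\cdot\n]$ has symbol involving $\partial_\xi \pi \cdot \partial_{x_2} u_0$, which decays at infinity by \eqref{dec} — compact again.

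Assembling: $\partial_s W_s u = \Pi[(A_0 \op[B_s]u - (u_0\cdot\n)\op[B_s]u)\circ\f_{-s}] + (\text{compact-valued})u = A_0 W_s u - (u_0\cdot\n)W_s u + (\text{compact})u$, so $W_s$ satisfies \eqref{e:adv} up to compact errors; since $W_0 = \Pi = \mathrm{Id}$ on $\Ldiv$, Duhamel against the true group $G_t$ gives $W_t - G_t = \int_0^t G_{t-s}(\text{compact}_s)\,ds$, and this Bochner integral of a norm-continuous compact-operator-valued function is compact, so $K_t := G_t - W_t$ is compact.

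The main obstacle I anticipate is bookkeeping the order-$(-1)$ remainders in the three places they arise — the composition $A_0\circ\op[B_s]$ versus $\op[a_0(\chi_s)B_s]$, the conjugation by the shear flow, and the commutator with $\Pi$ — and checking \emph{uniformly in $s\in[0,T]$} that each remainder symbol (i) lies in $S^{-1}(\O)$, for which I lean on \lem{l:stab} and the uniform-in-$s$ smoothness of $B_s$ already established, and (ii) decays as $x_2\to\pm\infty$ on near-diagonal regions, which traces back to \eqref{dec} and the explicit appearance of $\r_0',\r_0'',U',U''$ in every $x_2$-derivative, so that \thm{t:compact} applies. A secondary point requiring care is that $\op[B_s]$ is an honest bounded operator on $L^2(\O)$ with zero horizontal mean — guaranteed since $B_s \in S^0(\O)$ and the frequency support avoids the origin as noted after \eqref{mixedPDO2} — so all the manipulations take place among bounded operators and the Duhamel argument is legitimate.
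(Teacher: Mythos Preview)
Your approach is correct and is essentially the paper's: differentiate the candidate, show it satisfies \eqref{e:adv} modulo compact errors, then Duhamel. One bookkeeping slip: the chain rule for $\frac{d}{ds}[(\op[B_s]u)\circ\f_{-s}]$ yields only your first two terms --- the Egorov-type step you label a ``commutator with the flow pullback'' actually enters \emph{after} substituting $\partial_s B_s=(a_0\circ\chi_s)B_s$, when one must convert $(\op[a_0\circ\chi_s]v)\circ\f_{-s}$ into $\op[a_0](v\circ\f_{-s})$; the paper isolates this as the change-of-variables formula \eqref{e:ch}, alongside the composition formula \eqref{e:comp}, and verifies the order-$(-1)$ remainders of both satisfy \thm{t:compact} via \lem{l:stab} and \eqref{dec}. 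The paper also sidesteps your $[\Pi,u_0\cdot\n]$ commutator entirely by first extending both the group and the cocycle (unrestricted $b_0\in\C^3$) to all of $L^2(\O)$, proving the unprojected identity $G_t u=(\op[B_t]u)\circ\f_{-t}+K_t u$ there, and applying $\Pi$ only at the very end.
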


Let us make some preliminary observations. First, the system with eliminated pressure \eqref{e:adv} defines a semigroup on the whole space $L^2(\O)$ too. Indeed, the advection $u_0 \cdot \n$ is clearly a generator while the rest is a bounded operator given by a combination of explicit PDOs and an implicit operation $Q(v,r) = \n q$ which solves \eqref{orig-div} boundedly even if the input $v$ is not divergence-free. This semigroup, also denoted $G_t$ with a little abuse, leaves $\Ldiv$ invariant and of course the restriction restores the original semigroup (see discussions in \cite{}). If we prove the formula 
\begin{equation}\label{l:free}
G_t u = (\op[B_t] u) \circ \f_{-t}  + K_t u,
\end{equation}
on the entire space $L^2(\O)$, where $G_t$ and $B_t$ are the extentions, then by restricting it and projecting it to $\Ldiv$ will give the required \eqref{e:micro}.

Second, we will need the composition and change of variables formulas recast in our mixed domain settings with compactness check on remainders. Let $a(x_2,\xi) \in S^0$ be a matrix symbol that depends only on $x_2$. Then we have
\begin{equation}\label{e:comp}
\op[a] \circ \op[B_t] = \op[a \circ B_t] + \comp.
\end{equation}
(Note that the usual proper support assumption is not necessary here as we are not seeking full asymptotic expantion of the composition symbol, and the composition makes sense as both operators are bounded on $L^2$). Indeed, according to \eqref{left-q} another amplitude of $\op[B_t] $ is given by $B_t(y,\xi) - \int_0^1 \p_{\xi_2}\p_{y_2} B_t(s x_2 + (1-s) y_2,\xi) ds$. It follows from \lem{l:stab} and \lem{l:compact} that $\op[B_t(x,\xi)]$ differs from $\op[B_t(y,\xi)]$ by a compact operator. On the other hand,
\[
\op[a(x,\xi)] \circ \op[B_t(y,\xi)] u(x) = \int_{\O^*} \int_{\O} a(x,\xi) B_t(y,\xi) u(y) e^{i (x-y)\cdot \xi} dy d\xi.
\]
But then again by \eqref{left-q}, the amplitude $a(x,\xi) B_t(y,\xi)$ can be replaced with 
\[
a(x,\xi) B_t(x,\xi) +  \int_0^1 \p_{\xi_2}(a(x_2,\xi) \p_{y_2} B_t(s x_2 + (1-s) y_2,\xi)) ds.
\]
The symbol under the integral again gives a compact operator and \eqref{e:comp} is proved. And finally, the change of variable formula:
\begin{equation}\label{e:ch}
(\op[a \circ \chi_t] u) \circ \f_{-t} = \op[a]( u \circ \f_{-t} ) + \comp.
\end{equation}
Indeed, making routine change of variables on the right hand side we obtain
\[
(\op[a \circ \chi_t] u) \f_{-t} = \op[\tilde{a}] ( u \circ \f_{-t} ),
\]
where 
\[
\tilde{a}(x,y,\xi) = a\left(x, \p^\top \f_{-t}(x) \left( \int_0^1 \p^\top \f_{-t}( s y + (1-s)x) ds \right)^{-1} \xi \right),
\]
or more explicitely,
\[
\tilde{a}(x,y,\xi) = a\left(x_2,\xi_1,t\xi_1 \left( \int_0^1 U'(s y_2 + (1-s) x_2 )ds - U'(x_2) \right)+\xi_2 \right).
\]
In view of \eqref{left-q} we can replace this amplitude with 
\begin{equation*}\label{}
\begin{split}
a(x,\xi) + \int_0^1\int_0^1 \p_{\xi_2}^2 a\left(x_2,\xi_1, t\xi_1 \left( \int_0^1 U'(s (\tau x_2 +(1-\tau)y_2) + (1-s) x_2 )ds - U'(x_2) \right)+\xi_2 \right) \\ t s\xi_1  U''(s (\tau x_2 +(1-\tau)y_2) + (1-s) x_2 ) ds d\tau.
\end{split}
\end{equation*}
The latter identifies a symbol in class $S^{-1}$ that satisfies \lem{l:compact}. This proves \eqref{e:ch}.
\begin{proof}[Proof of \eqref{l:free} and hence \lem{l:str}]
Let $v_0 $ be in the domain the generator of \eqref{e:adv}. Let $u(t,x) = (\op[B_t] v_0 )(\f_{-t}(x))$. Then $u(t,\f_t(x))$ satisfies
\[
(u_t+u_0 \cdot \n u)(t,\f_t(x)) = \op[ (a_0 \circ \chi_t) B_t]v_0 (x),
\]
as follows from \eqref{ODE0}. So, in view of \eqref{e:comp} and \eqref{e:ch},
\begin{equation}\label{}
\begin{split}
u_t+u_0 \cdot \n u  & = ( \op[ (a_0 \circ \chi_t) B_t] v_0 )\circ \f_{-t}  \\
& = ( \op[a_0 \circ \chi_t] \circ \op[B_t] v_0 )\circ \f_{-t}  +\comp_t(v_0) \\
& = \op[a_0](   (\op[B_t] v_0 )\circ \f_{-t} ) +\comp_t(v_0) \\
& = \op[a_0] u +\comp_t(v_0).
\end{split}
\end{equation}
We see that $u$ satisfies the same equation as $G_t v_0$ up to a compact perturbation. By Duhamel's principle, this implies \eqref{l:free}.
\end{proof}

Before we proceed, let us discuss asymptotic behavior of $B_t$ at infinity. Let us notice that as $x_2 \ra \pm \infty$, the principal symbol $a_0$ converges to 
$ \bigl[\begin{smallmatrix}
 0 & g \r_{\pm \infty}^{-1} \xi_1 \xi^\perp |\xi|^{-2}\\ 0 & 0
\end{smallmatrix} \bigr]$, whose cocycle is explicit. So, let us fix a monotone positive function $\bar{\r}$ so that $\lim_{x_2 \ra \pm \infty} \bar{\r}(x_2) = \r_{\pm \infty}$ and $\bar{\r}', \bar{\r}'' \ra 0$ at infinity. Let us consider the new symbol
\[
\bar{a}(x,\xi) =  \left[\begin{array}{cc}0 &  \frac{g}{\bar{\r}(x_2)} \frac{\xi_1 \xi^{\perp}}{|\xi|^2}\\ 0 & 0\end{array}\right].
\]
It generates the cocycle
\[
\bar{B}_t(x,\xi) = \left[\begin{array}{cc} \Id &  \frac{g}{\bar{\r}(x_2)} \frac{\xi_1 \xi^{\perp}}{|\xi|^2} t \\ 0 & \Id\end{array}\right]
\]
over  the stationary phase flow $(x,\xi) \ra \chi_t(x,\xi)$. 

\begin{lemma}\label{l:asym}
We have for all $\a$
\begin{equation}\label{e:asym}
\lim_{x_2 \ra \pm \infty} \sup_{\xi \in \S} \| \p^\a_\xi( B_t(x,\xi) - \bar{B}_t(x,\xi) ) \| = 0.
\end{equation}
\end{lemma}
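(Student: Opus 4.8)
The plan is to compare $B_t$ not with $\bar B_t$ directly but with the constant nilpotent cocycle at spatial infinity, and then dispose of the (trivial) gap between $\bar B_t$ and that constant afterwards. By \eqref{dec}--\eqref{e:novac}, $a_0(x,\xi) \to a_\infty^\pm(\xi) := \bigl[\begin{smallmatrix} 0 & \frac{g}{\r_\pm}\frac{\xi_1\xi^\perp}{|\xi|^2}\\ 0 & 0\end{smallmatrix}\bigr]$ as $x_2 \to \pm\infty$; this matrix is nilpotent, so its autonomous cocycle is $C^\pm_t(\xi) := \exp(t\, a_\infty^\pm(\xi)) = \Id + t\, a_\infty^\pm(\xi)$, which is precisely the displayed formula for $\bar B_t$ with $\bar\r(x_2)$ replaced by the constant $\r_\pm$. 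Consequently $\bar B_t(x,\xi) - C^\pm_t(\xi) = g\,t\,(\bar\r(x_2)^{-1} - \r_\pm^{-1})\, \bigl[\begin{smallmatrix} 0 & \xi_1\xi^\perp|\xi|^{-2}\\ 0 & 0\end{smallmatrix}\bigr]$, and each $\xi$-derivative of this on $\S$ is $\le C_\a\,|\bar\r(x_2)^{-1} - \r_\pm^{-1}| \to 0$ because $\bar\r(x_2) \to \r_\pm > 0$. Thus it suffices to prove that $\sup_{\xi\in\S}\|\p_\xi^\a(B_t(x,\xi) - C^\pm_t(\xi))\| \to 0$ as $x_2 \to \pm\infty$, for each $\a$ (in fact the argument below gives this uniformly for $t$ in any bounded interval).

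The main step is a Duhamel representation for the $b$-equation \eqref{ODE0}. Since $\p_t C^\pm_t = a_\infty^\pm(\xi)\, C^\pm_t$, we have $\p_t(B_t - C^\pm_t) = a_0(\chi_t(x,\xi))(B_t - C^\pm_t) + \big(a_0(\chi_t(x,\xi)) - a_\infty^\pm(\xi)\big) C^\pm_t$, and using the cocycle identity $B_t(x,\xi) = B_{t-s}(\chi_s(x,\xi))\, B_s(x,\xi)$ to supply the solution operator of the homogeneous equation, we obtain
\[
B_t(x,\xi) - C^\pm_t(\xi) = \int_0^t B_{t-s}(\chi_s(x,\xi))\,\big(a_0(\chi_s(x,\xi)) - a_\infty^\pm(\xi)\big)\, C^\pm_s(\xi)\, ds .
\]
Differentiating in $\xi$ and expanding by Leibniz, $\p_\xi^\a$ of the right-hand side becomes a finite sum of integrals over $s\in[0,t]$ of products $\p_\xi^{\a_1}[B_{t-s}(\chi_s(x,\xi))]\cdot \p_\xi^{\a_2}[a_0(\chi_s(x,\xi)) - a_\infty^\pm(\xi)]\cdot \p_\xi^{\a_3}[C^\pm_s(\xi)]$. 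No induction is needed: every term carries the factor $\p_\xi^{\a_2}[a_0(\chi_s(x,\xi)) - a_\infty^\pm(\xi)]$, which will be shown to tend to $0$, while the other two factors remain uniformly bounded once $x_2$ is large.

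The \emph{key estimate} is that, for every multi-index $\g$,
\[
\lim_{x_2\to\pm\infty}\ \sup_{0<s<t,\ \xi\in\S}\ \big\| \p_\xi^\g\big( a_0(\chi_s(x,\xi)) - a_\infty^\pm(\xi)\big)\big\| = 0 .
\]
This rests on the explicit form of the flow: $\chi_s$ keeps $x_2$ fixed and moves $\xi$ to $\xi(s) = (\xi_1,\ \xi_2 - U'(x_2)\xi_1 s)$, so $a_0(\chi_s(x,\xi))$ is simply $a_0$ evaluated at $(x_2,\xi(s))$. Since $|\xi(s)| \ge 1 - |U'(x_2)|\, t \ge \tfrac12$ as soon as $x_2$ is large, the homogeneous rational matrices $\eta\otimes\eta/|\eta|^2$ and $\eta_1\eta^\perp/|\eta|^2$ and all their $\eta$-derivatives stay uniformly bounded at $\eta = \xi(s)$, and the linear map $\xi\mapsto\xi(s)$ converges to the identity in $C^\infty$ on $\S$, uniformly in $s\in[0,t]$, because it depends on $x_2$ only through $U'(x_2)\to0$. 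Reading off the four blocks, $a_0(x_2,\xi(s)) - a_\infty^\pm(\xi)$ and each of its $\xi$-derivatives is therefore bounded by $C_{\g,t}\big(|U'(x_2)| + |\r_0(x_2) - \r_\pm| + |\r_0'(x_2)|\big) \to 0$ by \eqref{dec}--\eqref{e:novac}. For the other two factors: $C^\pm_s(\xi) = \Id + s\, a_\infty^\pm(\xi)$ is polynomial in $s$, smooth and $0$-homogeneous in $\xi$, so all $\p_\xi^{\a_3}C^\pm_s$ are bounded on $[0,t]\times\S$; and by $0$-homogeneity $B_{t-s}(\chi_s(x,\xi)) = B_{t-s}(x_2,\ \xi(s)/|\xi(s)|)$, whence $\p_\xi^{\a_1}$ of it is, via the chain rule, a combination of $\xi$-derivatives of $B$ on the finite time window — uniformly bounded by the a priori smoothness of $B$ recalled just before the statement — and $\xi$-derivatives of $\xi\mapsto\xi(s)/|\xi(s)|$, which are bounded once $|\xi(s)|\ge\tfrac12$. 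Substituting the key estimate into the Leibniz sum yields the conclusion.

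The point that requires care is exactly the uniform (in $x_2$ large) control of $\p_\xi^{\a_1}[B_{t-s}(\chi_s(x,\xi))]$: one must combine the global-in-space, finite-time uniform smoothness of $B_t$ with its $0$-homogeneity and with the lower bound $|\xi(s)|\ge\tfrac12$ — valid \emph{because} $U'(x_2)\to 0$ — so that the factors $|\xi(s)|^{-|\a_1|}$ generated by differentiating through the normalization do not blow up. Everything else runs parallel to the proof of \lem{l:stab}.
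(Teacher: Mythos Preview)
Your proof is correct and follows essentially the same Duhamel strategy as the paper: write the difference of cocycles as an integral with integrand of the form $B_{t-s}\cdot(\text{symbol difference})\cdot(\text{comparison cocycle})$, show the middle factor tends to zero uniformly in $(s,\xi)$, and bound the outer factors. The only cosmetic difference is that the paper compares $B_t$ directly with $\bar B_t$ (using $\bar a(x,\xi)$ in the middle factor), whereas you route through the constant limit cocycle $C_t^\pm$ and then trivially close the gap $\bar B_t - C_t^\pm$; your treatment of the Leibniz terms and of the lower bound $|\xi(s)|\ge \tfrac12$ is in fact more explicit than the paper's one-line ``Applying $\p_\xi^\a$ to \eqref{diffB} we obtain \eqref{e:asym}.''
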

\begin{proof}
We have
\[
\p_t(B_t - \bar{B}_t) = a_0(\chi_t(x,\xi))(B_t- \bar{B}_t) + (a_0(\chi_t(x,\xi)) - \bar{a}(x,\xi)) \bar{B}_t.
\]
Since $B_0 - \bar{B}_0 = 0$, by Duhamel,
\begin{equation}\label{diffB}
B_t - \bar{B}_t = \int_0^t B_{t-s}(\chi_s(x,\xi)) (a_0(\chi_s(x,\xi)) - \bar{a}(x,\xi)) \bar{B}_s(x,\xi) ds.
\end{equation}
We have 
\[
a_0(\chi_s(x,\xi)) - \bar{a}(x,\xi) = a_0(x_2, \p \f_s^{-\top}(x)\xi) - \bar{a}(x_2,\xi),
\]
and $\p \f_s^{-\top}(x) \ra \Id$ as $x_2 \ra \pm \infty$ uniformly on $0\leq s\leq t$.
On the other hand $\p^\a_\xi a_0(x,\xi) \ra \p^\a_\xi \bar{a}(x,\xi)$ uniformly in $\xi \in \S$. Thus,
\[
\p^\a_\xi (a_0(x_2, \p\f_s^{-\top}(x)\xi) - \bar{a}(x_2,\xi)) = \p^\a_\xi a_0(x_2, \p \f_s^{-\top}(x)\xi) (\p \f_s^{-\top}(x))^\a - \p^\a_\xi \bar{a}(x_2,\xi) \ra 0
\]
uniformly as desired. Applying $\p_\xi^\a$ to \eqref{diffB} we obtain \eqref{e:asym}.
\end{proof}

\begin{lemma}\label{l:norm}
For each $t$ we have
\begin{equation}\label{}
\sup_{x\in \O, \xi \in \S} \|B_t(x,\xi)\|_{F(\xi) \ra F(\xi)} \leq \|G_t\|_\cC \leq C\sup_{x\in \O, \xi \in \S} \|B_t(x,\xi)\|_{F(\xi) \ra F(\xi)},
\end{equation}
where $C>0$ is an absolute constant.
\end{lemma}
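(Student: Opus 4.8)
The plan is to combine the microlocal representation \eqref{e:micro} with a matching pair of estimates for the pseudo-differential operator $\op[B_t]$ on $L^2(\O)$: a wave-packet lower bound for $\|G_t\|_\cC$ and a symbol-norm upper bound for $\|\op[B_t]\|_\cC$. First I would note that, since $\Pi$ is an orthogonal projection and $u\mapsto u\circ\f_{-t}$ is unitary on $L^2(\O)$ (the shear flow $\f_t$ preserves Haar measure), \eqref{e:micro} gives $\|G_t\|_\cC\leq\|\op[B_t]\|_\cC$; moreover on $\Ldiv$ one may replace $\op[B_t]$ by $\op[B_t\pi]=\op[B_t]\Pi$ modulo compacts (composition calculus, cf. \eqref{e:comp}), which makes all symbol norms below the restricted ones $\|B_t(x,\xi)\|_{F(\xi)\to F(\xi)}$ of the statement. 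It then remains to prove
\[
\sup_{x\in\O,\,\xi\in\S}\|B_t(x,\xi)\|_{F(\xi)\to F(\xi)}\ \leq\ \|G_t\|_\cC
\qquad\text{and}\qquad
\|\op[B_t\pi]\|_\cC\ \leq\ C\sup_{x\in\O,\,\xi\in\S}\|B_t(x,\xi)\|_{F(\xi)\to F(\xi)} .
\]

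For the lower bound I would test $G_t$ on wave packets. Fix $(x_0,\xi_0)\in\O\times\S$ with $\xi_{0,1}\neq0$ (the hyperplane $\xi_1=0$ is excluded by the mean-zero constraint, and there $B_t=\Id$ on $F(\xi)$) and a unit vector $b_0\in F(\xi_0)$; since $(b_{0,1},b_{0,2})\perp\xi_0$ it is a multiple of $\xi_0^\perp$, so it can be realized as the velocity part of a genuinely divergence-free packet. Fix a small scale $L>0$ and a bump $\eta$, and set $u_n\in\Ldiv$ with velocity part $\propto|k_n|^{-1}\n^\perp\bigl(\eta((x-x_0)/L)e^{ik_n\cdot x}\bigr)$ and density part $b_{0,3}\eta((x-x_0)/L)e^{ik_n\cdot x}$, where $k_n\in\O_0^*$, $|k_n|\to\infty$, $k_n/|k_n|\to\xi_0$, normalized to $\|u_n\|_{L^2}=1$. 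Then $u_n\rightharpoonup0$ (the frequencies escape), so $K_tu_n\to0$; using $0$-homogeneity of $B_t$ in $\xi$ and a Taylor expansion of the symbol over the packet, $\op[B_t]u_n$ equals $B_t(x_0,\xi_0)b_0$ times a scalar packet up to $O(L)+o_n(1)$ in $L^2$. Composition with $\f_{-t}$ is isometric and — this is the delicate point — on the localization region moves the packet's frequency exactly along the $\xi$-component of $\chi_t$; since $B_t$ is a cocycle over $\chi_t$ preserving the orthogonality relation, $B_t(x_0,\xi_0)b_0$ lies in the fiber over that evolved covector, so $\Pi$ acts as the identity on the transported packet up to $o_n(1)$. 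Hence $\limsup_n\|G_tu_n\|\geq\|B_t(x_0,\xi_0)b_0\|-O(L)$, and since $\|A\|_\cC\geq\limsup_n\|Au_n\|$ for any weakly null unit sequence and $L$ is arbitrary, $\|B_t(x_0,\xi_0)b_0\|\leq\|G_t\|_\cC$. Taking the supremum over $x_0,\xi_0,b_0$ gives the first inequality.

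For the upper bound I would split $\op[B_t\pi]=\op[(1-\chi_R)B_t\pi]+\op[\chi_RB_t\pi]$. Applying \eqref{e:seeley} entrywise to the ($0$-homogeneous) symbol $B_t\pi$, $\limsup_{R\to\infty}\|\op[(1-\chi_R)B_t\pi]\|\leq C\sup_{x,\xi}\|B_t(x,\xi)\|_{F(\xi)\to F(\xi)}$. For the low-frequency part write $\chi_RB_t\pi=\chi_R\bar B_t\pi+\chi_R(B_t-\bar B_t)\pi$, with $\bar B_t$ the explicit boundary cocycle of \lem{l:asym} (which also preserves $\cF$). Since every $\xi$-derivative of $B_t-\bar B_t$ decays as $x_2\to\pm\infty$ by \lem{l:asym}, and $\chi_R(B_t-\bar B_t)\pi$ is supported in $|\xi|<2R$ (hence of arbitrarily negative order), \thm{t:compact} shows $\op[\chi_R(B_t-\bar B_t)\pi]$ is compact. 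Finally $\bar B_t(x,\xi)\pi(\xi)=\pi(\xi)+g t\,\bar\r(x_2)^{-1}N(\xi)$ with $N(\xi)=\bigl[\begin{smallmatrix}0&\xi_1\xi^\perp|\xi|^{-2}\\0&0\end{smallmatrix}\bigr]$ and $\sup_\xi\|N(\xi)\|=1$, so the spatial dependence factors out and $\|\op[\chi_R\bar B_t\pi]\|\leq1+g t/\inf_{x_2}\bar\r(x_2)=1+g t/\min\{\r_{+\infty},\r_{-\infty}\}$ uniformly in $R$. Since $\bar\r$ is monotone this infimum is a limit at $x_2\to\pm\infty$, and evaluating $\|B_t(x,\xi)b_0\|$ at $b_0=e_3$, $\xi=(\pm1,0)$, $x_2\to\pm\infty$ shows $\sup_{x,\xi}\|B_t(x,\xi)\|_{F(\xi)\to F(\xi)}\geq\sqrt{1+g^2t^2/\min\{\r_{+\infty}^2,\r_{-\infty}^2\}}\geq 2^{-1/2}\bigl(1+g t/\min\{\r_{+\infty},\r_{-\infty}\}\bigr)$. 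Combining, $\|\op[B_t\pi]\|_\cC\leq\|\op[(1-\chi_R)B_t\pi]\|+\|\op[\chi_R\bar B_t\pi]\|$ for every $R$, and letting $R\to\infty$ yields $\|\op[B_t\pi]\|_\cC\leq C'\sup_{x,\xi}\|B_t(x,\xi)\|_{F(\xi)\to F(\xi)}$ with an absolute constant $C'$.

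The step I expect to be the main obstacle is the low-frequency piece $\op[\chi_RB_t\pi]$: unlike in the spatially compact case it is \emph{not} compact, because $B_t(x_2,\xi)$ does not decay as $x_2\to\pm\infty$. This is what forces peeling off the boundary cocycle $\bar B_t$, isolating the compact error via \thm{t:compact} and \lem{l:asym}, and estimating the explicit remainder $\op[\chi_R\bar B_t\pi]$ by hand — its consistency with $\sup\|B_t\|$ resting on the monotonicity of the auxiliary profile $\bar\r$. On the lower-bound side the delicate point is the bookkeeping for $\Pi$ on the transported wave packet, which succeeds precisely because $\f_{-t}$ transports frequencies by the characteristic flow $\chi_t$ while $B_t$ preserves the fibers $F(\xi)$.
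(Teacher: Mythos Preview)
Your proof is correct and follows essentially the same route as the paper: the lower bound by testing $G_t$ on oscillatory wave packets that converge weakly to zero, and the upper bound by the frequency split $\op[(1-\chi_R)B_t]+\op[\chi_RB_t]$, invoking \eqref{e:seeley} for the high-frequency part and subtracting the boundary cocycle $\bar B_t$ of \lem{l:asym} on the low-frequency part so that the difference is compact and the remainder can be estimated explicitly. The only cosmetic difference is that the paper first absorbs the projection via the identity $\Pi(\op[B_t]u)\circ\f_{-t}=(\op[(\pi\circ\chi_t)B_t]u)\circ\f_{-t}+\comp=(\op[B_t]u)\circ\f_{-t}+\comp$ (using that $B_t$ maps $F(\xi)$ into $F(\p\f_t^{-\top}\xi)$), whereas you carry $\pi$ inside the symbol throughout; and your final constant comparison $\sup\|B_t\|\gtrsim 1+gt/\min\{\r_{+\infty},\r_{-\infty}\}$ is spelled out more carefully than the paper's terse ``clearly'' (indeed the paper's displayed bound is missing the factor of $t$ you correctly include).
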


\begin{proof} The bound from below is relatively easy to show. Let us first observe that in view of \eqref{e:comp} and \eqref{e:ch},
\[
G_t u = (\op[\pi \circ \chi_{t}] \op[B_t] u) \circ \f_{-t} + \comp = (\op[B_t]u)\circ \f_{-t} + \comp.
\]
(In the latter the operators are to be understood as $\Ldiv \ra L^2(\O)$, but the sum is valued in $\Ldiv$). So, let $\e>0$ be arbitrary, and let $x_0 \in \O$, $\xi_0 \in \S$, $b_0 \in F(\xi_0)$, $|b_0| = 1$  be such that $| B_t(x_0,\xi_0) b_0 | > M - \e$, $M = \sup_{x\in \O, \xi \in \S} \|B_t(x,\xi)\|_{F(\xi) \ra F(\xi)} $. Let $U_0$ be an open neighborhood of $x_0$ in which $| B_t(x,\xi_0) b_0 | > M - \e$ for all $x\in U_0$. Let us fix $h\in L^2(\O)\cap C^\infty_0(U_0)$ with $\|h\|_2 = 1$, and consider sequence $u_\d = \Pi( b_0 h (x) e^{i \xi_0 \cdot x /\d})$. By the classical localization principle (note that $h$ is compactly supported, see \cite{shubin}) we have $u_\d =  b_0 h (x) e^{i \xi_0 \cdot x /\d} + O(\d)$, as $\d \ra 0$, and certainly $u_\d \ra 0$ weakly in $L^2$. Again, by localization and compactness of remainders,
\[
G_t u_\d(x) = B_t(\f_{-t}(x), \xi_0) b_0 h (\f_{-t}(x)) e^{i \xi_0 \cdot \f_{-t}(x) /\d} + o(1).
\]
So, $\|G_t u_\d\|_{L^2} \geq \inf_{x\in U_0}| B_t(x,\xi_0) b_0 | - o(1)> M - \e - o(1)$. This establishes the lower bound.

Now, let us fix a large $R>0$, and consider a smooth rescaled cut-off function $\phi_R(\xi) = \phi(\xi/R)$. By \lem{l:bounded} the norm of $\op[(1-\phi_R) B_t]$ for large $R$ enjoys the necessary bound. On the other hand, by  \lem{l:asym}, $\op[ \phi_R B_t ] = \op [ \phi_R \bar{B}_t]$ up to a compact operator. Yet,
\[
 \|  \op [ \phi_R \bar{B}_t] \|_\cC \leq \|  \op [ \phi_R \bar{B}_t] \|_{L^2 \ra L^2} \leq C \left(1+ \frac{g}{\min\{\r_\infty, \r_{-\infty}\}}\right),
\]
where in the latter inequality we used explicit form of the symbol $ \bar{B}_t$ and monotonicity of $\bar{\r}$. Clearly, $1+ \frac{g}{\min\{\r_\infty, \r_{-\infty}\}} \leq M$. So, this term is also under control.
\end{proof}

Let us denote by $\mu$ the classical Lyapunov-Oseledets exponent of the b-cocycle
\begin{equation}\label{}
\mu = \lim_{t \ra \infty} \frac{1}{t} \log \sup_{x\in \O, \xi \in \S} \|B_t(x,\xi)\|_{F(\xi) \ra F(\xi)}.
\end{equation}
As a consequence of \lem{l:norm} and the general results of the spectral theory we obtain the desired result.

\begin{theorem}\label{t:main}
The essential spectral radius of the semigroup operator at each $t$ is given by
\begin{equation}\label{}
r_{\ess}(G_t, \Ldiv) = e^{t \mu}.
\end{equation}
\end{theorem}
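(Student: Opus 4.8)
The plan is to deduce the theorem from the two-sided Calkin-norm estimate of \lem{l:norm} by means of Gelfand's spectral radius formula together with subadditivity of the $b$-cocycle. Since $\{G_t\}_{t\in\R}$ is a group, $G_t^{\,n}=G_{nt}$ for every $n\in\N$, and (by the very definition of $r_{\ess}$ as the spectral radius in the $C^*$-algebra $\cC$, alternatively via Nussbaum's theorem \cite{nuss}) Gelfand's formula gives
\[
r_{\ess}(G_t,\Ldiv)=\lim_{n\to\infty}\|G_t^{\,n}\|_\cC^{1/n}=\lim_{n\to\infty}\|G_{nt}\|_\cC^{1/n}.
\]

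Next I would introduce $N(s)=\sup_{x\in\O,\ \xi\in\S}\|B_s(x,\xi)\|_{F(\xi)\to F(\xi)}$, which is finite for each fixed $s$ since $B_s$ is a uniformly smooth symbol of class $S^0$ on finite time intervals. Applying \lem{l:norm} at time $nt$ yields $N(nt)\le\|G_{nt}\|_\cC\le C\,N(nt)$ for every $n$, with $C>0$ the \emph{absolute} constant of that lemma. Taking $n$-th roots and letting $n\to\infty$, the factor $C^{1/n}\to1$ drops out, so the theorem reduces to the identity $\lim_{n\to\infty}N(nt)^{1/n}=e^{t\mu}$.

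For this last step I would invoke the cocycle structure. The stationary-phase flow $\chi_s$ maps the bundle $\cF$ over $\O\times(\R^2\setminus\{0\})$ into itself, and the fundamental solution obeys $B_{s_1+s_2}(x,\xi)=B_{s_1}(\chi_{s_2}(x,\xi))\,B_{s_2}(x,\xi)$; passing to fiberwise operator norms, taking the supremum over $(x,\xi)$, and using $0$-homogeneity in $\xi$ gives the submultiplicativity $N(s_1+s_2)\le N(s_1)N(s_2)$. Hence $s\mapsto\log N(s)$ is subadditive, and Fekete's lemma gives $\lim_{s\to\infty}\tfrac1s\log N(s)=\inf_{s>0}\tfrac1s\log N(s)$, which is exactly the Lyapunov-Oseledets exponent $\mu$ defined just before the theorem. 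Restricting to the arithmetic progression $s=nt$ (the sequence $n\mapsto\log N(nt)$ is again subadditive) gives $\lim_{n\to\infty}\tfrac{1}{nt}\log N(nt)=\mu$, whence $\lim_{n\to\infty}N(nt)^{1/n}=e^{t\mu}$ and $r_{\ess}(G_t,\Ldiv)=e^{t\mu}$.

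All the genuine analytic difficulty sits in \lem{l:str} and \lem{l:norm}; the remainder is soft. The only points that must be checked with some care are that the comparison constant in \lem{l:norm} is independent of $t$, so that it vanishes under the $n$-th root, and that $N(s)<\infty$ for each finite $s$ (both already recorded above). I do not anticipate any further obstacle.
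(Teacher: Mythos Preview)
Your argument is correct and is precisely the ``general results of the spectral theory'' the paper invokes in its one-line deduction of the theorem from \lem{l:norm}: Gelfand's formula in the Calkin algebra together with the group law $G_t^{\,n}=G_{nt}$, then the two-sided bound of \lem{l:norm} (with $t$-independent constant $C$), and finally submultiplicativity of $N(\cdot)$ from the cocycle identity $B_{s_1+s_2}(x,\xi)=B_{s_1}(\chi_{s_2}(x,\xi))B_{s_2}(x,\xi)$ plus Fekete's lemma. There is no substantive difference between your expansion and what the paper has in mind; the only cosmetic point is that the fiberwise norm is really $F(\xi)\to F(\p\f_t^{-\top}\xi)$ rather than $F(\xi)\to F(\xi)$, but this is the paper's own notational shortcut and does not affect the submultiplicativity (since $\chi_{s_2}$ is a bijection on $\O\times(\R^2\setminus\{0\})$ and $B$ is $0$-homogeneous in $\xi$).
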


We briefly discuss an extension of \thm{t:main} to the case of Sobolev spaces. 
Let us fix a smoothness parameter $m \in \R$, not necessarily an integer, and consider the corresponding Sobolev space
\begin{equation}\label{e:sobolev}
\Hdiv = \{ u\in H^m: \hat{u}(\xi) \in F(\xi), \text{ for all } \xi \in \O_0^*\}.
\end{equation}
The equation \eqref{e:adv} generates a group $\{G^m_t\}_{t\in \R}$ on $\Hdiv$ as well. We follow the argument of \cite[Section 3.5]{S-ess} to reduce the Sobolev case back to $\Ldiv$. We consider the isomorphism $I_m = |\n|^m : \Hdiv \ra \Ldiv$. Let $L_m$ denote the generator of the group $\{G^m_t\}_{t\in \R}$ on $\Hdiv$. Consider
a new generator $L_0 = I_m L_m I_m^{-1}$ on $\Ldiv$. Clearly, if $G^0_t = e^{tL_0}$, then $G^0_t = I_m G_t^m I^{-1}_m$, and so the spectrum of $G^0_t$ on $\Ldiv$ is equal to the spectrum of $G_t^m$ on $\Hdiv$.  The technique developed above allows to apply the classical composition formula for symbols, giving  the operator $L$ the same advective structure as the original PDE \eqref{e:adv} with its principal symbol given by
\[
a_m(x,\xi) = a_0(x,\xi) - m (\p u_0^{\top}(x) \xi, \xi)
|\xi|^{-2} \Id.
\]
The corresponding $b$-cocycle $B^m$ is given by
\begin{equation}\label{BXcocycle}
B^m_t(x,\xi) = \left|\p \f_t^{-\top}(x) \xi \right|^m
B_t(x,\xi),
\end{equation}
where $B_t(x,\xi)$ is the original cocycle. The analysis above therefore applies to the semigroup with leading order PDO given by the new cocycle $B_t^m$ as its symbol, and hence we conclude the formula for $G_t^m$:
\begin{equation}\label{}
r_{\ess}(G_t, \Hdiv) = e^{t \mu_m}.
\end{equation}
where 
\begin{equation}\label{}
\mu_m = \lim_{t \ra \infty} \frac{1}{t} \log \sup_{x\in \O, \xi \in \S} \|B^m_t(x,\xi)\|_{F(\xi) \ra F(\xi)}.
\end{equation}
However, the flow $\f_t$ is linear, and hence, it contributes no term of non-trivial  exponential type.  This readily implies that $\mu_m = \mu$ for all $m\in \R$.

\begin{corollary}\label{c:main}
    The essential spectral radius of the semigroup on $\Hdiv$ is given by
    \begin{equation}\label{}
    r_{\ess}(G_t, \Hdiv) = e^{t \mu}.
    \end{equation}
\end{corollary}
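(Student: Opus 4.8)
The plan is to transport the $L^2$ statement of \thm{t:main} to $\Hdiv$ by conjugating the group with the Fourier multiplier $I_m = |\n|^m$, exactly along the lines of \cite[Section 3.5]{S-ess}. Since $I_m : \Hdiv \to \Ldiv$ is an isomorphism intertwining the group $\{G^m_t\}$ on $\Hdiv$ with the group $\{G^0_t\}$ on $\Ldiv$, where $G^0_t = I_m G^m_t I_m^{-1}$, the essential spectral radius of $G^m_t$ on $\Hdiv$ equals that of $G^0_t$ on $\Ldiv$. So it is enough to exhibit $G^0_t$ in the advective PDO form \eqref{e:adv} and to run the analysis of \lem{l:str}--\lem{l:norm} on it.

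First I would compute the conjugated generator $L_0 = I_m L_m I_m^{-1}$. Write $L_m = -u_0\cdot\n + \op[a_0] + B$ with $B$ compact. Conjugation keeps $B$ compact, turns $\op[a_0]$ into $\op[a_0]$ plus a PDO of order $\leq -1$, and produces from $-u_0\cdot\n$ the commutator term $-[|\n|^m, u_0\cdot\n]|\n|^{-m}$, which by the composition calculus recast as in \eqref{e:comp} is an order-$0$ PDO with principal symbol $-m(\p u_0^\top(x)\xi,\xi)|\xi|^{-2}\Id$ plus a remainder of order $\leq -1$. All these negative-order remainders have symbols built from $x_2$-derivatives of $U$ and $\r_0$, which decay as $x_2 \to \pm\infty$ by \eqref{dec}; hence \thm{t:compact} makes them compact on $\Ldiv$. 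Therefore
\[
L_0 = -u_0\cdot\n + \op[a_m] + \text{compact}, \qquad a_m(x,\xi) = a_0(x,\xi) - m(\p u_0^\top(x)\xi,\xi)|\xi|^{-2}\Id,
\]
and $a_m$ is again of class $S^0(\O)$, $0$-homogeneous in $\xi$, depends only on $x_2$, and shares the limit $\bar a$ of $a_0$ at $x_2 = \pm\infty$ (the added term vanishes there). Consequently \lem{l:stab}, \lem{l:str}, \lem{l:asym} and \lem{l:norm} apply verbatim with $a_m$, $B^m_t$ replacing $a_0$, $B_t$, giving $r_{\ess}(G^0_t, \Ldiv) = e^{t\mu_m}$, where $\mu_m$ is the Lyapunov exponent of the cocycle $B^m_t$ solving $b_t = a_m(\chi_t(x,\xi))b$.

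Next I would identify $B^m_t$ in closed form. Since $a_m = a_0 + \beta\Id$ with the scalar $\beta(x,\xi) = -m(\p u_0^\top(x)\xi,\xi)|\xi|^{-2}$, the ansatz $B^m_t = e^{\int_0^t \beta(\chi_s(x,\xi))\,ds}\,B_t$ solves the cocycle equation; and along the flow $\beta(\chi_s(x,\xi)) = m\,\frac{d}{ds}\log|\p\f_s^{-\top}(x)\xi|$, so the integral telescopes and $B^m_t(x,\xi) = (|\p\f_t^{-\top}(x)\xi|/|\xi|)^m B_t(x,\xi)$, which on $\S$ is \eqref{BXcocycle}. The $\xi$-flow $\xi \mapsto \p\f_t^{-\top}(x)\xi$ is the linear map with matrix $\bigl(\begin{smallmatrix} 1 & 0 \\ -U'(x_2)t & 1\end{smallmatrix}\bigr)$, whose norm and inverse norm are bounded by polynomials in $t$ uniformly in $x$ (using $\|U'\|_\infty < \infty$). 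Hence on $\S$ the scalar $|\p\f_t^{-\top}(x)\xi|^m$ lies between $c(1+t)^{-|m|}$ and $C(1+t)^{|m|}$, so $\sup_{x,\xi\in\S}\|B^m_t(x,\xi)\|$ and $\sup_{x,\xi\in\S}\|B_t(x,\xi)\|$ differ by at most a polynomial factor in $t$; dividing the logarithm by $t$ and letting $t\to\infty$ gives $\mu_m = \mu$, whence $r_{\ess}(G_t, \Hdiv) = e^{t\mu}$.

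The hard part is the conjugation step: one must check, inside the mixed-domain calculus of Section~2, that $|\n|^m\op[a_0]|\n|^{-m}$ and $[|\n|^m, u_0\cdot\n]|\n|^{-m}$ have the claimed symbol expansions with all remainders not merely of negative order but also decaying at spatial infinity --- the latter being precisely the hypothesis of \thm{t:compact} and the feature that distinguishes the present non-compact setting from the classical local one. The decay is furnished by \eqref{dec}. Once the structure $L_0 = -u_0\cdot\n + \op[a_m] + \text{compact}$ is in hand, the remaining steps --- the closed form of $B^m_t$ and the identity $\mu_m = \mu$ --- are elementary, the latter simply because the bicharacteristic $\xi$-flow of a shear is linear and polynomially bounded in $t$, contributing no exponential growth.
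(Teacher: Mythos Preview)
Your proposal is correct and follows essentially the same route as the paper: conjugate the group by $I_m=|\nabla|^m$ to reduce to $\Ldiv$, identify the conjugated principal symbol as $a_m=a_0-m(\p u_0^\top\xi,\xi)|\xi|^{-2}\Id$, obtain the cocycle $B^m_t=|\p\f_t^{-\top}(x)\xi|^m B_t$, and conclude $\mu_m=\mu$ since the $\xi$-flow of a shear is linear and contributes only polynomial growth. If anything, you supply more detail than the paper does---you spell out the commutator computation, the closed-form derivation of $B^m_t$, and you correctly flag that the negative-order remainders must decay at spatial infinity to invoke \thm{t:compact}, which the paper handles with the one-line ``the technique developed above allows to apply the classical composition formula.''
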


\subsection{Formula for $\mu$} Let us now compute the value of $\mu$ explicitly. First recall, from the basic cocycle theory that $\mu$ is determined by the local Lyapunov exponents:
\[
\mu =\sup_{x\in \O, \xi \in \S, b_0 \in F(\xi), |b_0|=1} \lim_{t \ra \infty} \frac{1}{t} \log  |B_t(x,\xi)b_0|.
\]
So, it suffices to investigate growth of individual solutions to  \eqref{ODE0}.
Let us write $b = (b_1,b_2,r)$. The system \eqref{ODE0} in components reads
\begin{equation}\label{}
\left\{\begin{split}
\dot{b}_1 & = - U'(x_2)b_2 + 2 U'(x_2) \frac{\xi_1^2}{|\xi|^2} + \frac{g \xi_1 \xi_2}{\r_0 |\xi|^2} r \\
\dot{b}_2 & = 2 \frac{\xi_1\xi_2}{|\xi|^2} U'(x_2) b_2 - \frac{g \xi_1^2}{\r_0 |\xi|^2}r \\
\dot{r} & = - b_2 \r_0'(x_2).
\end{split}\right.
\end{equation}
Eliminating $r$ from the second equation, we obtain the following ODE for $u = b_2$:
\[
\ddot{u} - 4 \frac{\xi_1\xi_2}{|\xi|^2}U'(x_2) \dot{u} + \left[ 4(U'(x_2))^2 - \frac{g \r_0'}{\r_0} \right] \frac{\xi_1^2}{|\xi|^2} u = 0.
\]
Making the change
\begin{equation}\label{e:change}
v = \exp\left\{ 2\xi_1 U'(x_2) \int \frac{\xi_2}{|\xi|^2} dt \right\} u
\end{equation}
we obtain the Sturm-Liouville problem
\begin{equation}\label{e:SL}
\ddot{v} + p(t) v = 0,
\end{equation}
where 
\[
p(t) = \left[ 4(U'(x_2))^2 - \frac{g \r_0'}{\r_0} \right] \frac{\xi_1^2}{|\xi|^2} +2 \xi_1 U'(x_2) \left(\frac{\xi_2}{ |\xi|^{2}}\right)_t - 4 \left[\frac{\xi_1\xi_2}{|\xi|^2}U'(x_2)\right]^2.
\]
Suppose that $U'(x_2) = 0$. Then $v = u$, $\xi$ is stationary, and we have
\[
\ddot{u} - \frac{g \r_0'}{\r_0}\frac{\xi_1^2}{|\xi|^2} u = 0.
\]
It now depends on the sign of $\r_0'(x_0)$. If it is positive, which corresponds to the Rayleigh-Taylor unstable regime, we obtain exponentially growing solutions with maximum exponent given by $\sqrt{ \frac{g \r_0'}{\r_0} }$. Otherwise we have oscillatory or linear solutions with $0$ exponential growth. 

Suppose $U'(x_2) \neq 0$. In the case $\xi_1 =0$ we again obtain linear solutions. Otherwise, let us notice that 
\[
\lim_{t \ra \infty} t^2 p(t) = 2 - \frac{g \r_0'}{\r_0 (U')^2} = p_0.
\]
So, comparing solutions of \eqref{e:SL} with the classical Euler equation $v'' + \frac{p_0}{t^2} v = 0$ we conclude that $v$ has only polynomial growth. Hence, so does $u$ since the exponent in \eqref{e:change} contributes a polynomial term too. We see that in this case the local Lyapunov exponent vanishes. We thus arrive at the following formula
\begin{equation}\label{e:mu2}
\mu = \max\left\{0, \sup_{x: U'(x) = 0, \r'_0(x)>0} \sqrt{   \frac{g \r_0'(x)}{\r_0(x)} } \right\}.
\end{equation}
Let us make two observations. When $\r_0'(x) \leq 0$ at all critical points of the shear $U$, it is clear that $\mu=0$. Thus, variations in the background velocity profile changes the character of the Rayleigh-Taylor instability from shortwave to possibly longwave. If $U \equiv 0$, then $\mu >0$ whenever there is a point where $\r_0'>0$ and thus Rayleigh-Taylor instability takes a shortwave nature. However as shown in Guo and Hwang \cite{HG}, this exponent $\mu$ is a limit from below of a sequence of exact eigenvalues as  well, and moreover, $\mu$ is the exponential rate of the semigroup. Thus, there is no spectrum beyond $\mu$ in this case. In fact, \cite{HG} uses 
a variational formula for the exponent:
\begin{equation}\label{}
\L^2 = \sup_{v\in L^2(\O)} \frac{ \int g \r'_0 v^2 dx}{\int \r_0 v^2 dx}.
\end{equation}
But, relabeling $f = \r_0 v^2$, we see that the above sup becomes
\begin{equation}\label{}
\L^2 = \sup_{\| f\|_{L^1(\O) } = 1, f \geq 0} \int \frac{g \r'_0}{\r_0} f dx = \mu^2.
\end{equation}
So, the two exponents coincide in the case when $U=0$.

\subsection{Unstable states with neutral essential spectrum}

Let us fix a steady state with a shear $U' \neq 0$ at any point where $\r_0'>0$, thus making the essential spectrum neutral. Let us form the classical ansatz

Let us note that \eqref{e:Rayl} is the full linearized system.

\begin{theorem}\label{t:Rayl} Suppose $U'(y) \neq 0$ and $\rho_0'(y)>0$ for all $y\in \R$, and let all the assumptions \eqref{dec}, \eqref{e:novac} be satisfied. Then for every $k\in \N$ there exists an $\e_0>0$ such that for every $|\e|<\e_0$ the state $(\e U, \rho_0)$ has a solution $(c,\phi,r)$ with $\im c > 0$, and $r,\phi \in H^m(\R)$ for all $m>0$.
\end{theorem}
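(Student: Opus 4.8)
The plan is to produce the unstable mode by analytic perturbation from the hydrostatic state $(0,\r_0)$, for which \eqref{e:Rayl} collapses to a self-adjoint eigenvalue problem. Eliminating $r=-\phi\r_0'/(\e U-c)$ from \eqref{e:Rayl} (legitimate once $\e U-c$ never vanishes, see below), the system becomes the single scalar equation
\[
\Phi(\phi,c;\e):=L_k\phi+\frac{\e(\r_0 U')'}{\e U-c}\,\phi+\frac{g\r_0'}{(\e U-c)^2}\,\phi=0,\qquad L_k:=k^2\r_0-\p_y(\r_0\p_y).
\]
At $\e=0$ this reads $L_k\phi=\s g\r_0'\phi$ with $\s=-1/c^2$. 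Since $L_k$ is a positive self-adjoint Sturm--Liouville operator with $\s_{\ess}(L_k)=[k^2\min\{\r_+,\r_-\},\infty)\subset(0,\infty)$, while $g\r_0'>0$ is bounded and decays at infinity by \eqref{dec}, \eqref{e:novac}, this weighted eigenvalue problem has a lowest eigenvalue $\s_1(k)>0$, which is \emph{simple}, with ground-state eigenfunction $\phi_0>0$, $\phi_0\in H^2(\R)$ (the hypothesis $\r_0'>0$ makes $g\r_0'$ a genuine positive weight, so the Krein--Rutman / Sturm oscillation argument applies). Put $c_0:=i/\sqrt{\s_1(k)}$, so that $c_0^2=-1/\s_1(k)$ and $\im c_0>0$; then $(c_0,\phi_0)$ solves $\Phi(\cdot\,;0)=0$.

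The key structural point is that the perturbed problem is an \emph{analytic family of linear problems} precisely in the regime we want. We seek $c$ with $\im c>0$ and $\phi\in H^2(\R)$. Since $\e U(y)$ is real while $\im c\geq\tfrac12\im c_0>0$ for $(c,\e)$ near $(c_0,0)$, we have $|\e U(y)-c|\geq\tfrac12\im c_0$ uniformly in $y\in\R$; hence the ``singular'' factors $1/(\e U-c)$ and $1/(\e U-c)^2$ are bounded, and --- together with $(\r_0U')'=\r_0'U'+\r_0U''$, which decays by \eqref{dec} --- the map $(\phi,c,\e)\mapsto\Phi(\phi,c;\e)$ is a well-defined jointly analytic map $H^2(\R)\times\{\im c>0\}\times\R\to L^2(\R)$ in a neighbourhood of $(\phi_0,c_0,0)$, linear in $\phi$.

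To run the implicit function theorem we augment with a normalization. The kernel of $D_\phi\Phi(\phi_0,c_0;0)=L_k-\s_1(k)g\r_0'$ is $\mathrm{span}\{\phi_0\}$ (one-dimensional by simplicity), and this operator is self-adjoint and Fredholm of index $0$ since $0\notin\s_{\ess}$. Define $\mathcal{G}(\phi,c;\e):=\bigl(\Phi(\phi,c;\e),\ \lan\phi-\phi_0,\phi_0\ran\bigr)$ valued in $L^2(\R)\times\C$, so $\mathcal{G}(\phi_0,c_0;0)=0$. Its derivative in $(\phi,c)$ at the base point sends $(\psi,\d c)$ to $\bigl((L_k-\s_1(k)g\r_0')\psi-\tfrac{2}{c_0^{3}}g\r_0'\phi_0\,\d c,\ \lan\psi,\phi_0\ran\bigr)$; the shear term carries a factor $\e$ and so does not contribute at $\e=0$. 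This is an isomorphism: the component of the data along $g\r_0'\phi_0$ fixes $\d c$ uniquely, because
\[
\lan g\r_0'\phi_0,\phi_0\ran=\int g\r_0'\phi_0^2\dy>0
\]
(here $\r_0'>0$ is used); with $\d c$ so chosen, $(L_k-\s_1(k)g\r_0')\psi$ is prescribed inside $\{\phi_0\}^{\perp}=\mathrm{Ran}(L_k-\s_1(k)g\r_0')$, solvable modulo $\mathrm{span}\{\phi_0\}$, and the normalization pins down $\psi$. The analytic implicit function theorem then gives, for $|\e|<\e_0$, a unique $(\phi(\e),c(\e))$ near $(\phi_0,c_0)$ with $\mathcal{G}(\phi(\e),c(\e);\e)=0$, analytic in $\e$; shrinking $\e_0$ we keep $\im c(\e)>0$ by continuity, and $\phi(\e)\neq0$ by the normalization. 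Setting $r(\e):=-\phi(\e)\r_0'/(\e U-c(\e))$ yields the sought nontrivial solution $(c(\e),\phi(\e),r(\e))$ of \eqref{e:Rayl} for the state $(\e U,\r_0)$.

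For regularity, rewrite the scalar equation as the ODE $\r_0\phi''+\r_0'\phi'=\bigl(k^2\r_0+\tfrac{\e(\r_0U')'}{\e U-c}+\tfrac{g\r_0'}{(\e U-c)^2}\bigr)\phi$; its coefficients stabilize to constants as $y\to\pm\infty$, with the zeroth-order one tending to $k^2\r_\pm>0$, so $\phi$ and all its derivatives decay exponentially, and bootstrapping then places $\phi$ and $r=-\phi\r_0'/(\e U-c)$ in $H^m(\R)$ for all $m>0$. Finally, for $\e\neq0$ the critical points of $\e U$ are those of $U$, so by \eqref{e:mu2} the hypotheses $U'\neq0$, $\r_0'>0$ force $\mu=0$: these unstable modes lie strictly in the discrete spectrum while the essential spectrum is neutral. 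The one genuinely delicate point is the invertibility of $D_{(\phi,c)}\mathcal{G}$ in the third step; it hinges on the simplicity of the principal eigenvalue $\s_1(k)$ and on the non-degeneracy $\int g\r_0'\phi_0^2\dy\neq0$, together with the realization in the second step that the Rayleigh system --- singular on the real axis --- becomes a regular analytic family exactly because the desired eigenvalue has $\im c>0$ while the background shear is real.
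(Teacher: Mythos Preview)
Your proof is correct and follows essentially the same route as the paper: perturb from the hydrostatic state $(0,\rho_0)$, where the Rayleigh system reduces to a self-adjoint weighted eigenvalue problem with a simple principal eigenvalue, and then apply the implicit function theorem, the invertibility of the linearization resting precisely on the nondegeneracy $\int g\rho_0'\phi_0^2\,dy>0$ guaranteed by $\rho_0'>0$. Your presentation differs only cosmetically---you argue simplicity via Krein--Rutman/Sturm oscillation rather than the Wronskian, package the normalization into the map $\mathcal{G}$ rather than restrict to $[\phi_k]^\perp$, and spell out the uniform lower bound $|\e U-c|\geq\tfrac12\im c_0$ and the exponential decay more explicitly---but the underlying argument is the same.
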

\begin{proof} 
Let us recall that for $\e =0$ there exists an unstable solution to \eqref{e:Rayl} for every $k>0$ as shown in \cite{HG}. Namely, it results through solving the variational problem, $c_k =  i\frac{\l_k}{k}$, where 
\[
\l_k^{-2} = \inf_{\phi \in H^1} \frac{  \int \rho_0 \left( k^{-2} (\phi')^2 + \phi^2 \right) dy}{\int g\rho_0' \phi^2 dy} 
\]
Let $\phi_k$ be the solution. It is a unique minimizer. Indeed, the corresponding ODE \eqref{e:Rayl} has explicit Wronskian, $W(y) \rho_0(y) = W(0) \rho_0(0)$. Thus, if one solution decays to zero the other one has to grow, thus not in  $H^1$. Let us define the operator
\[
L_c \phi = - (\r_0 \phi')' + k^2 \r_0 \phi  +  \frac{g \r'_0 }{c^2} \phi.
\]
and functional
\[
\begin{split}
F&(\e,c,w) = - (\r_0 (\phi_k + w)')' + k^2 \r_0 (\phi_k + w)  + \e \frac{(\r_0 U')'}{\e U - c}( \phi_k + w) + \frac{g \r'_0 }{( \e U - c)^2}(\phi_k + w)\\
F&: \R \times \C^+ \times H^{m+2} \ra H^m,
\end{split}
\]
where $\C^+ = \{ c \in \C: \im c >0\}$. Then $F$ is a $C^1$ map. 
Note that $F(0,c,w) = L_c(\phi_k + w)$, and hence $F(0,c_k,0) = L_{c_k}\phi_k = 0$. Taking variational derivatives we obtain
\begin{equation}\label{}
\begin{split}
D_c F(0,c_k,0) \a &= - \a \frac{1}{c_k^3} g \rho' \phi_k \\
D_w F(0,c_k,0) w & = L_{c_k} w.
\end{split}
\end{equation} 
To avoid degeneracy let us restrict the functional $F$ to $W = H^{m+2} \ominus [\phi_k]$. Then $D_{c,w} F(0,c_k,0) : W \ra H^m$ is an isomorphism. Indeed, first, the range of $L_{c_k}$ is one-codimensional in $H^m$ since $L_{c_k}$ is self-adjoint and has a one-dimensional kernel. The element $ \rho_0' \phi_k$ does not belong to the range, for if it does we would have had equality $L_{c_k} w = \rho_0' \phi_k$ for some $w \in W$. Testing it  with $\phi_k$ and using symmetry of $L_{c_k}$ we obtain $\int \rho_0' |\phi_k|^2 dy = 0$. However, by the assumption of the Lemma $\rho'_0 >0$ pointwise, implying $\phi_k = 0$. Hence, the range of $D_{c,w} F(0,c_k,0)$ is $H^m$. Injectiveness follows similarly. By the Implicit Function Theorem, there exists $\e_0>0$ such that for all $|\e|<\e_0$ there exists a unique solution to $F(\e,c_\e,w_\e) = 0$. The higher order smoothness of $w_\e$ is trivial from elliptic estimates. The corresponding $r_\e$ can be restored from the second equation in \eqref{e:Rayl} using that $U-c$ never vanishes for $c\in \C^+$.
\end{proof}

With the results of the previous section we see that the theorem provides plenty of examples of steady states with neutral essential spectrum in $\Ldiv$, yet non-trivial unstable discrete spectrum.


\begin{thebibliography}{10}
    
    \bibitem{Ch1961}
    S.~Chandrasekhar.
    \newblock {\em Hydrodynamic and hydromagnetic stability}.
    \newblock The International Series of Monographs on Physics. Clarendon Press,
    Oxford, 1961.
    
    \bibitem{CCLR2001}
    Catherine Cherfils-Cl\'erouin, Olivier Lafitte, and Pierre-Arnaud Raviart.
    \newblock Asymptotic results for the linear stage of the {R}ayleigh-{T}aylor
    instability.
    \newblock In {\em Mathematical fluid mechanics}, Adv. Math. Fluid Mech., pages
    47--71. Birkh\"auser, Basel, 2001.
    
    \bibitem{fried}
    Susan Friedlander.
    \newblock On nonlinear instability and stability for stratified shear flow.
    \newblock {\em J. Math. Fluid Mech.}, 3(1):82--97, 2001.
    
    \bibitem{FSV}
    Susan Friedlander, Walter Strauss, and Misha Vishik.
    \newblock Nonlinear instability in an ideal fluid.
    \newblock {\em Ann. Inst. H. Poincar\'e Anal. Non Lin\'eaire}, 14(2):187--209,
    1997.
    
    \bibitem{HeLaf2003}
    Bernard Helffer and Olivier Lafitte.
    \newblock Asymptotic methods for the eigenvalues of the {R}ayleigh equation for
    the linearized {R}ayleigh-{T}aylor instability.
    \newblock {\em Asymptot. Anal.}, 33(3-4):189--235, 2003.
    
    \bibitem{HG}
    Hyung~Ju Hwang and Yan Guo.
    \newblock On the dynamical {R}ayleigh-{T}aylor instability.
    \newblock {\em Arch. Ration. Mech. Anal.}, 167(3):235--253, 2003.
    
    \bibitem{Laf2001}
    Olivier Lafitte.
    \newblock Sur la phase lin\'eaire de l'instabilit\'e de {R}ayleigh-{T}aylor.
    \newblock In {\em S\'eminaire: \'Equations aux {D}\'eriv\'ees {P}artielles,
        2000--2001}, S\'emin. \'Equ. D\'eriv. Partielles, pages Exp. No. XXI, 22.
    \'Ecole Polytech., Palaiseau, 2001.
    
    \bibitem{Laf2008}
    Olivier Lafitte.
    \newblock The linear and nonlinear {R}ayleigh-{T}aylor instability for the
    quasi-isobaric profile.
    \newblock {\em Phys. D}, 237(10-12):1602--1639, 2008.
    
    \bibitem{L}
    Zhiwu Lin.
    \newblock Nonlinear instability of ideal plane flows.
    \newblock {\em Int. Math. Res. Not.}, (41):2147--2178, 2004.
    
    \bibitem{L-survey}
    Zhiwu Lin.
    \newblock Some recent results on instability of ideal plane flows.
    \newblock In {\em Nonlinear partial differential equations and related
        analysis}, volume 371 of {\em Contemp. Math.}, pages 217--229. Amer. Math.
    Soc., Providence, RI, 2005.
    
    \bibitem{LZ}
    Zhiwu Lin and Chongchun Zeng.
    \newblock Unstable manifolds of {E}uler equations.
    \newblock {\em Comm. Pure Appl. Math.}, 66(11):1803--1836, 2013.
    
    \bibitem{nuss}
    Roger~D. Nussbaum.
    \newblock The radius of the essential spectrum.
    \newblock {\em Duke Math. J.}, 37:473--478, 1970.
    
    \bibitem{Rayleigh}
    Lord Rayleigh.
    \newblock Analytic solutions of the {R}ayleigh equation for linear density
    profiles.
    \newblock {\em Proc. Lond. Math. Soc.}, 14:170--177, 1883.
    
    \bibitem{RT2007}
    Michael Ruzhansky and Ville Turunen.
    \newblock On the {F}ourier analysis of operators on the torus.
    \newblock In {\em Modern trends in pseudo-differential operators}, volume 172
    of {\em Oper. Theory Adv. Appl.}, pages 87--105. Birkh\"auser, Basel, 2007.
    
    \bibitem{RT2010}
    Michael Ruzhansky and Ville Turunen.
    \newblock Quantization of pseudo-differential operators on the torus.
    \newblock {\em J. Fourier Anal. Appl.}, 16(6):943--982, 2010.
    
    \bibitem{shubin}
    M.~A. Shubin.
    \newblock {\em Pseudodifferential operators and spectral theory}.
    \newblock Springer-Verlag, Berlin, second edition, 2001.
    \newblock Translated from the 1978 Russian original by Stig I. Andersson.
    
    \bibitem{SL}
    R.~Shvydkoy and Y.~Latushkin.
    \newblock Operator algebras and the {F}redholm spectrum of advective equations
    of linear hydrodynamics.
    \newblock {\em J. Funct. Anal.}, 257(10):3309--3328, 2009.
    
    \bibitem{S-ess}
    Roman Shvydkoy.
    \newblock The essential spectrum of advective equations.
    \newblock {\em Comm. Math. Phys.}, 265(2):507--545, 2006.
    
    \bibitem{S-cont}
    Roman Shvydkoy.
    \newblock Continuous spectrum of the 3{D} {E}uler equation is a solid annulus.
    \newblock {\em C. R. Math. Acad. Sci. Paris}, 348(15-16):897--900, 2010.
    
    \bibitem{treves}
    Fran{\c{c}}ois Tr{\`e}ves.
    \newblock {\em Introduction to pseudodifferential and {F}ourier integral
        operators. {V}ol. 1}.
    \newblock Plenum Press, New York-London, 1980.
    \newblock Pseudodifferential operators, The University Series in Mathematics.
    
    \bibitem{V}
    Misha Vishik.
    \newblock Spectrum of small oscillations of an ideal fluid and {L}yapunov
    exponents.
    \newblock {\em J. Math. Pures Appl. (9)}, 75(6):531--557, 1996.
    
    \bibitem{VF2003}
    Misha Vishik and Susan Friedlander.
    \newblock Nonlinear instability in two dimensional ideal fluids: the case of a
    dominant eigenvalue.
    \newblock {\em Comm. Math. Phys.}, 243(2):261--273, 2003.
    
\end{thebibliography}

\end{document}